\newtheorem{theorem}{Theorem}[section]
\newtheorem{lemma}[theorem]{Lemma}
\newtheorem{corollary}[theorem]{Corollary}
\theoremstyle{remark}
\newtheorem{remark}[theorem]{Remark}
\newtheorem{example}[theorem]{Example}
\newcommand{\e}{\mathrm{e}}
\newcommand{\dist}{\operatorname{dist}}
\newcommand{\ran}{\operatorname{Ran}}
\newcommand{\RR}{\mathbb{R}}
\newcommand{\CC}{\mathbb{C}}
\newcommand{\NN}{\mathbb{N}}
\newcommand{\ZZ}{\mathbb{Z}}
\newcommand{\EE}{\mathbb{E}}
\newcommand{\PP}{\mathbb{P}}
\newcommand{\cD}{\mathcal{D}}
\newcommand{\cK}{\mathcal{K}}
\newcommand{\eps}{\varepsilon}
\renewcommand{\phi}{\varphi}
\newcommand{\indic}{\mathbf{1}}
\newcommand{\lr}{\left(}
\newcommand{\rr}{\right)}
\DeclareMathOperator{\Ran}{Ran}
\DeclareMathOperator{\Tr}{Tr}
\DeclareMathOperator*{\supp}{supp}
\newcommand*\Diff[1]{\mathop{}\!\mathrm{d}#1}
\newcommand{\norm}[1]{\ensuremath{\left\lVert #1 \right\rVert}}
\newcommand{\bes}{\begin{equation*}}
\newcommand{\ees}{\end{equation*}}
\newcommand{\be}{\begin{equation}}
\newcommand{\ee}{\end{equation}}
\newcommand{\eqs}[1]{\begin{align*}#1\end{align*}}
\newcommand{\eq}[1]{\begin{align}#1\end{align}}
\newcommand{\abs}[1]{\left\lvert #1 \right\rvert}
\newcommand{\grad}{\nabla}
\renewcommand{\L}{\Lambda}
\renewcommand{\restriction}{|}
\newcommand{\per}{\mathrm{per}}
\definecolor{darkred}{rgb}{0.5,0,0}
\definecolor{darkgreen}{rgb}{0,0.5,0}
\definecolor{darkblue}{rgb}{0,0,0.5}
\title[Unique continuation with singular potentials]{Quantitative unique continuation for spectral subspaces of Schr{\"o}dinger operators with singular potentials}
\subjclass[2010]{Primary 35Pxx; Secondary 35J10, 35B60.}
\keywords{Unique continuation, Carleman estimates, control theory, random Schr{\"o}dinger operators.}
\author[A.~Dicke]{Alexander Dicke}
\address{
	A.~Dicke,
	Technische Univer\-si\-t\"at Dortmund, Fakult\"at f\"ur Mathematik,
	44227 Dortmund, Germany
}
\email{adicke.math@gmail.com}
\author[C.~Rose]{Christian Rose}
\address{
	C.~Rose,
	Universität Potsdam, Institut für Mathematik, 14476 Potsdam, Germany
}
\email{rosecmath@googlemail.com}
\author[A.~Seelmann]{Albrecht Seelmann}
\address{
	A.~Seelmann,
	Technische Univer\-si\-t\"at Dortmund, Fakult\"at f\"ur Mathematik,
	44227 Dortmund, Germany
}
\email{albrecht.seelmann@mathematik.tu-dortmund.de}
\author[M.~Tautenhahn]{Martin Tautenhahn}
\address{
	M.~Tautenhahn,
	Univer\-si\-t\"at Leipzig, Fakult{\"a}t f{\"u}r Mathematik und Informatik, 04109 Leipzig, Germany
}
\email{martin.tautenhahn@math.uni-leipzig.de}
\begin{document}
%
%
\begin{abstract}
	Recent (scale-free) quantitative unique continuation estimates for spectral subspaces of Schr{\"o}dinger operators are extended 
	to allow singular potentials such as certain $L^p$-functions. 
	The proof is based on accordingly adapted Carleman estimates.
	Applications include Wegner and initial length scale estimates for random Schr{\"o}dinger operators and control 
	theory for the controlled heat equation with singular heat generation term.
\end{abstract}

\maketitle
%
%
\section{Introduction} 
The focus of the present note is laid on~\emph{quantitative unique continuation estimates}. 
Such estimates are inequalities of the type 
\begin{equation} \label{eq:intro}
	\lVert \psi \rVert_{L^2 (S)}^2 \geq C \lVert \psi \rVert^2_{L^2 (\Lambda)}	
\end{equation}
for functions $\psi$ in the spectral subspace of a lower semibounded self-adjoint operator $H$ in $L^2 (\Lambda)$ up to energy $E$,
where $S \subset \Lambda \subset \RR^d$ and $C$ is a positive constant depending on the geometry of the set $S$, the energy $E$, and the operator $H$. 
Of special interest is an explicit form of this dependence.
In particular, inequality~\eqref{eq:intro} is called scale-free if the constant $C$ is independent of $\Lambda$.
If $H$ is the negative Laplacian, $\Lambda = \RR^d$ and $S \subset \RR^d$ is a so-called thick set, then inequality~\eqref{eq:intro} follows from the
well-known Logvinenko-Sereda theorem~\cite{LogvinenkoS-74}, see also~\cite{Panejah-61, Panejah-62, Kacnelson-73, Kovrijkine-thesis, Kovrijkine-01} and \cite{EgidiV-20,Egidi-21,EgidiS-21} for the case of different domains $\Lambda$.
Nowadays, there is a huge amount of literature on quantitative unique continuation estimates under various assumptions on $\Lambda$, the operator $H$, geometric properties 
of the set $S$, or the length on the energy interval, see, e.g.,~\cite{LebeauR-95,LebeauZ-98,JerisonL-99,RojasMolinaV-13,Klein-13,NakicTTV-18,NakicTTV-18-JST,StollmannS-21,LebeauM-arxiv}. 
Quantitative unique continuation estimates have various applications in several fields of mathematics such as control theory for the heat equation, 
see, e.g.,~\cite{LebeauR-95,LebeauZ-98,JerisonL-99,NakicTTV-20,EgidiNSTTV-20}, as well as the theory of random Schr{\"o}dinger
operators, see, e.g.,~\cite{BourgainK-05,CombesHK-07,GerminetK-13,BourgainK-13,RojasMolinaV-13,Klein-13,TaeuferT-18,NakicTTV-18,MuellerRM-22,SeelmannT-20}. 

In case of Schr{\"o}dinger operators $H = -\Delta + V$, scale-free quantitative unique continuation has almost exclusively been considered for~\emph{bounded} potentials $V$. 
There has been a recent attempt to remove this boundedness assumption in~\cite{KleinT-16a}, but its result is restricted to short energy intervals only. 
As a consequence, it is not suitable for certain applications, e.g., in control theory or in the theory of random Schr{\"o}dinger operators.
Other results dealing with singular lower order terms, such as~\cite{SchechterS-80,Wolff-92a,KochT-01}, are not quantitative and concern only qualitative unique continuation.

In the present note we extend earlier results on quantitative unique continuation for bounded potentials~\cite{NakicTTV-18-JST,NakicTTV-18} to a certain class of unbounded potentials. Compared to the above mentioned~\cite{KleinT-16a}, our result holds for energy intervals of arbitrary length, and our class of unbounded potentials covers and extends (for $d\geq 2$) the ones from~\cite{KleinT-16a}. 

The paper is organized as follows: The basic notations and the statement of the main result, Theorem~\ref{thm:ucp}, are given in Section~\ref{sec:results}, followed by a discussion of several applications in Section~\ref{sec:application}.
In Section~\ref{sec:potentials} we provide some technical results and examples for the class of potentials we consider. 
Thereafter, Section~\ref{sec:Carleman} is devoted to adaptations of two Carleman estimates, and in Section~\ref{sec:UCP} we conclude the proof of 
of the main result, where the previously mentioned Carleman estimates play an essential role. 
Some technical details are postponed to Appendix~\ref{sec:ghost-dimension}.
%
%
\section{The main result} \label{sec:results}
Let $\Lambda$ be a~\emph{generalized rectangle} of the form $\Lambda = \bigtimes_{j=1}^d (a_j,b_j)$ for some $a_j,b_j\in\RR\cup\{\pm\infty\}$, $a_j<b_j$, $j=1,\dots,n$,
and let $V\colon\L\to\RR$ be measurable such that the domain of the associated self-adjoint multiplication operator in $L^2(\L)$ contains $H^1(\L)$, that is, $\cD(V)\supset H^1(\L)$. 
We call such potentials $V$~\emph{admissible} (on $\L$). 

We show in Lemma~\ref{lem:operator-form-bnd} below that an admissible potential on $\L$ is infinitesimally $\Delta$-bounded on $L^2(\L)$ and satisfies 
\be
	\norm{V\psi}_{L^2(\L)}^2 \leq \lambda_1 \norm{\grad\psi}_{L^2(\L)}^2+\lambda_2\norm{\psi}_{L^2(\L)}^2,\quad\psi \in H^1(\L),
	\label{eq:form-bounded}
\ee
for some $\lambda_1,\lambda_2\geq 0$.
Thus, the Schr{\"o}dinger operator
\[
	H_\L^\bullet=-\Delta_\L^\bullet+V\colon L^2(\L)\supset \cD(\Delta_\L^\bullet)\to L^2(\L),\quad \bullet\in\{D,N,\per\},
\]
is self-adjoint and lower semibounded, where $-\Delta_\L^D$ and $-\Delta_\L^N$ denote the Dirichlet and Neumann realizations of the Laplacian on $\L$, 
and $-\Delta_\L^\per$ denotes the Laplacian with periodic boundary conditions (where applicable), respectively. 
 
Let $G>0$ and $\delta\in (0,G/2)$. 
We consider~\emph{equidistributed sets} of the form
\[
	S_\delta =\bigcup_{j} B_j,
\]
where the union is taken over all $j\in(G\ZZ)^d$ with $j+(-G/2,G/2)^d\subset\L$ and each $B_j$ is a ball of radius $\delta$ contained in $j+(-G/2,G/2)^d$;
we always assume that $\L$ is such that the above union for $S_\delta$ is non-empty.
Let us emphasize that unique continuation estimates from equidistributed sets were considered earlier in \cite{RojasMolinaV-13,Klein-13}. 
\par
Our main result reads as follows.

\begin{theorem} \label{thm:ucp}
	Let $H_\L^\bullet$ and $S_\delta\neq\emptyset$ be as above. 
	Then there is a constant $N>0$, depending only on the dimension $d$, such that for all
	$E\in\RR$ and all $\psi\in \Ran P_{H_\L^\bullet}(E)$ we have
	\[
		\norm{\psi}_{L^2(S_\delta)}^2
		\geq
		(\delta/G)^{N\cdot(1+G^2\lambda_1+G^{4/3}\lambda_2^{1/3}+G\sqrt{\max\{0,E\}})}\norm{\psi}_{L^2(\L)}^2,
	\]
	where $P_{H_\L^\bullet}(E)$ denotes the spectral projection of $H_\L^\bullet$ associated with the interval $(-\infty,E]$ and 
	$\lambda_1,\lambda_2\geq 0$ are as in~\eqref{eq:form-bounded}.
\end{theorem}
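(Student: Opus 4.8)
The plan is to follow the by-now standard strategy for scale-free quantitative unique continuation estimates, as developed in \cite{NakicTTV-18-JST,NakicTTV-18,Klein-13}, but to carry the singular potential through the Carleman estimates in a way that respects the infinitesimal $\Delta$-boundedness \eqref{eq:form-bounded}. First I would reduce to the model situation on a single cube: by translation and covering it suffices to bound $\norm{\psi}_{L^2(B_j)}^2$ from below by a fraction of $\norm{\psi}_{L^2(j+(-G/2,G/2)^d)}^2$ with a constant uniform in $j$, and then sum over the $j\in(G\ZZ)^d$ with $j+(-G/2,G/2)^d\subset\L$, using that the cubes tile $\L$ (up to a null set) so that the boundary conditions $\bullet\in\{D,N,\per\}$ only enter through the fact that $\psi\in\Ran P_{H_\L^\bullet}(E)$ satisfies the differential inequality below on all of $\L$. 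By the usual rescaling $x\mapsto x/G$ I may take $G=1$; this is exactly the step that turns the bare parameters into the dimensionless combinations $G^2\lambda_1$, $G^{4/3}\lambda_2^{1/3}$, $G\sqrt{\max\{0,E\}}$ appearing in the exponent.

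Next, the spectral hypothesis must be converted into a pointwise differential inequality. For $\psi\in\Ran P_{H_\L^\bullet}(E)$ one has $\langle H_\L^\bullet\psi,\phi\rangle\le E\langle\psi,\phi\rangle$ in the form sense, hence $-\Delta\psi = (E-V)\psi - (H_\L^\bullet - E)\psi$ with the last term nonpositive-definite; more usefully, writing $u=\psi$ one gets in the distributional sense the inequality $|\Delta u|\le |V||u| + |E||u| + |f|$ where $f$ encodes the part of the spectral measure, so after the standard ghost-dimension trick (extend $u$ to a function $U$ on $\L\times(-R,R)$ solving an equation with an extra variable, see Appendix~\ref{sec:ghost-dimension}) one arrives at an inequality of the form $|\Delta U|\le W|U|$ with $W$ built from $|V|$ and a constant of size $\max\{0,E\}$. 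The key point — and the place where the $L^p$/admissibility assumption is really used — is that $\norm{VU}_{L^2}$ is controlled by $\lambda_1\norm{\grad U}_{L^2}^2+\lambda_2\norm{U}_{L^2}^2$ via \eqref{eq:form-bounded}, so $V$ enters the Carleman estimates only through this form bound rather than through an $L^\infty$ norm.

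Then comes the analytic core: apply the two adapted Carleman estimates from Section~\ref{sec:Carleman}. One Carleman estimate is used for the interpolation/propagation of smallness from the ball $B_j$ to a slightly larger set, and the second (with boundary terms, or with the weight adapted near the boundary of the cube) is used to reach the full cube. In each application the potential term $\norm{VU}_{L^2}$ produced on the right-hand side must be absorbed into the left-hand side; this forces the Carleman parameter $\tau$ to be chosen of size $\gtrsim 1+\lambda_1+\lambda_2^{1/3}+\sqrt{\max\{0,E\}}$ — the cube roots and the combination $\lambda_1+\lambda_2^{1/3}$ come precisely from balancing the two terms $\lambda_1\norm{\grad U}^2$ and $\lambda_2\norm{U}^2$ against the gain $\tau^3\norm{U}^2$ and $\tau\norm{\grad U}^2$ that a Carleman estimate for $\Delta$ typically provides. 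Chaining the two estimates and optimizing in $\tau$ yields a three-ball-type inequality whose constant is of the form $(\delta/G)^{N(1+G^2\lambda_1+G^{4/3}\lambda_2^{1/3}+G\sqrt{\max\{0,E\}})}$ on the rescaled cube, with $N$ depending only on $d$; a final covering/summation over $j$ and a Cauchy–Schwarz-type recombination (as in \cite{NakicTTV-18}) removes the restriction to a single cube and produces the claimed estimate on $\L$.

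The main obstacle I expect is the absorption step: unlike a bounded potential, $V$ cannot simply be moved to the right and bounded by $\norm{V}_\infty\norm{U}_{L^2}$, so one has to feed the gradient term $\lambda_1\norm{\grad U}^2$ back into the Carleman estimate, which only works if the Carleman weight is chosen so that the $\grad U$-coercivity is strong enough and uniform up to the boundary of the cube — in particular the ghost-dimension construction and the weight must be arranged so that no uncontrolled boundary contributions appear and so that the form bound \eqref{eq:form-bounded}, which is global on $\L$, can be localized to the cube without losing the scale-free character. Getting the exact powers $G^2$, $G^{4/3}$, $G$ in the exponent (rather than merely \emph{some} polynomial dependence) is the quantitative heart of the argument and will require careful bookkeeping of how each norm scales under $x\mapsto x/G$.
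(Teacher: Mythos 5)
Your overall route is the same as the paper's: rescale to $G=1$, pass to the ghost dimension, run the two Carleman estimates with the potential absorbed into the left-hand side via the form bound \eqref{eq:form-bounded}, and then chain as in \cite{NakicTTV-18-JST}; your balancing of $\lambda_1\norm{\grad U}^2$ and $\lambda_2\norm{U}^2$ against the Carleman gains $\tau\norm{\grad U}^2$ and $\tau^3\norm{U}^2$ is exactly how the thresholds \eqref{eq:Carleman-NRT-const} and \eqref{eq:Carleman-LR-const} arise, and your worry about localizing the global form bound is resolved in the paper by extending $V$ by reflection, which keeps $\lambda_1,\lambda_2$ unchanged (Lemma~\ref{lem:extension}).

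However, there is a concrete gap: you treat the Carleman absorption as the \emph{only} place where admissibility is used, but the argument of \cite{NakicTTV-18-JST} invokes $\norm{V}_\infty$ a second time, namely in the step that returns from the ghost-dimension function $\Psi$ to $\psi$ by comparing $\norm{\Psi}_{H^1(\L\times(-\tau,\tau))}$ with $\norm{\psi}_{L^2(\L)}$. Your plan does not address this, and it cannot be handled by the form bound alone inside a Carleman inequality; one needs the additional observation (Lemma~\ref{lem:norm-compare} and, in particular, \eqref{eq:smoothing}) that on $\Ran P_H(E)$ the spectral constraint $\langle \Psi_t,H\Psi_t\rangle\leq E\norm{\Psi_t}^2$ combined with \eqref{eq:form-bounded} yields $\norm{V\Psi_t}^2\leq(2\lambda_1E+\lambda_1^2+2\lambda_2)\norm{\Psi_t}^2$, i.e.\ $V$ acts on the spectral subspace like a bounded potential with effective bound $\omega$. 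This is also where the energy dependence actually enters the exponent (through $\omega$ and the factor $\e^{2\tau\sqrt{E}}$), whereas the Carleman thresholds $\alpha_0,\beta_0$ depend only on $\lambda_1,\lambda_2,\rho$ — not on $E$ as you suggest. A related inaccuracy: after the ghost-dimension construction the extended function satisfies $(-\Delta_{d+1}+V)\Psi=0$ exactly, by \eqref{eq:extendedOperator}; there is no differential inequality $\abs{\Delta U}\leq W\abs{U}$ with $W$ built from $\abs{V}$ and $\max\{0,E\}$, and working with such a pointwise inequality would in any case be incompatible with an unbounded $V$ controlled only in the form sense.
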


The theorem covers the main result of~\cite{NakicTTV-18-JST} for essentially bounded potentials $V$ (with $\lambda_1 = 0$ and
$\lambda_2 = \norm{V}_\infty^2$) and extends it to the class of admissible potentials, which contains certain singular and, in
particular, unbounded potentials that were not treated before, see Example \ref{ex:admissible-potentials} below and the discussion thereafter. 

It should also be mentioned that the statement of Theorem~\ref{thm:ucp} is not necessarily limited to the case where in each coordinate the same boundary conditions are imposed. 
In principle, we may instead impose in each coordinate of $\Lambda$ Dirichlet, Neumann, or (quasi-)periodic 
(if applicable) boundary conditions separately with essentially the same proof. 
One only has to make sure that the domain of the corresponding Laplacian on $\Lambda$ belongs to $H^2(\Lambda)$, which is guaranteed, for instance, by~\cite[Example~4.2]{Seelmann-21}.

\section{Applications} \label{sec:application}
Let $V\colon\RR^d\to\RR$ be admissible on $\RR^d$. 
Then the constants from~\eqref{eq:form-bounded} can be chosen independently from $\L$, provided that $\L$ contains a cube of a given, 
fixed, side length, see Lemma~\ref{lem:extension} below.
In this sense, the quantitative unique continuation estimate from Theorem~\ref{thm:ucp} is~\emph{scale-free}. 
This property carries over into the constants appearing in our applications.

\subsection{Control theory} 
We consider the controlled heat equation with a singular heat generation term $-V$, that is,
\be \label{eq:controlled-heat-equation}
	\dot u(t) + H_\Lambda^\bullet u (t) =\indic_{S_\delta} f (t), \quad
	t \in (0,T], \quad
	u (0) = u_0 \in L^2 (\Lambda),
\ee
where $f \in L^2 ((0,T) ; L^2 (\Lambda))$ is a so-called control function and $T > 0$ is a given final time. 
The system~\eqref{eq:controlled-heat-equation} is called~\emph{null-controllable in time $T > 0$} if for all initial states $u_0 \in L^2 (\Lambda)$ there exists 
a control function $f \in L^2 ((0,T) ; L^2 (\Lambda))$ such that the mild solution to~\eqref{eq:controlled-heat-equation} satisfies
\[
	u (T) = \mathrm{e}^{- H_\Lambda^\bullet T} u_0 + \int_0^T \mathrm{e}^{- H_\Lambda^\bullet (T-s)} \indic_{S_\delta} f (s) \mathrm{d} s = 0 .
\]
Moreover, the \emph{control cost} in time $T$ is defined as
\[
	C_T = \sup_{\lVert u_0 \rVert = 1} \inf \bigl\{ \lVert f \rVert_{L^2 ((0,T) ; L^2 (\Lambda))} \colon u (T) = 0 \bigr\} .
\]

An immediate application of Theorem~\ref{thm:ucp} is that the controlled heat equation~\eqref{eq:controlled-heat-equation} is null-controllable in time $T$ and the cost satisfies
\be \label{eq:cost}
	C_T \leq \frac{1}{\sqrt{T}} \left(\frac{G}{\delta}\right)^{K (1 + G^2 \lambda_1 + G^{4/3} \lambda_2^{1/3})}
	\exp \left( \frac{K G^2 \ln^2 (G / \delta)}{T}  - \kappa_- T \right) ,
\ee
where $\kappa_- = \min \{ \kappa , 0 \}$ with $\kappa = \inf \sigma (H_\Lambda^\bullet)$, and 
$K>0$ is a constant depending only on the dimension $d$.

We briefly explain the main steps needed for the proof of this statement.
The first step is a reformulation of Theorem~\ref{thm:ucp}. More precisely, 
for all $\psi \in \ran P_{H_\Lambda^\bullet} (E)$ and all $E \geq \kappa$ we have
\begin{equation*}
	\lVert \psi \rVert_{L^2 (\Lambda)}^2 \leq d_0 \mathrm{e}^{d_1 \sqrt{E - \kappa_-}} \lVert \psi \rVert_{L^2 (S_\delta)}^2 ,
\end{equation*}
where
\[ 
	d_0 = (G / \delta)^{N (1 + G^2 \lambda_1 + G^{4/3} \lambda_2^{1/3})}\quad\text{and}\quad d_1 = N G \ln (G / \delta).
\]
It is well known that inequalities of this type imply a so-called \emph{final state observability estimate} 
for the dual system to~\eqref{eq:controlled-heat-equation} with an (explicitly given) observability constant $C_{\mathrm{obs}}$, 
see, e.g.,~\cite{LebeauR-95,FursikovI-96,Miller-10,TenenbaumT-11,WangZ-17,BeauchardP-18,NakicTTV-20}. 
By Douglas' lemma~\cite{Douglas-66}, see also~\cite{DoleckiR-77}, this observability estimate implies that the controlled heat 
equation~\eqref{eq:controlled-heat-equation} is null controllable in time $T$, and that the control cost in time $T$ satisfies $C_T \leq \sqrt{C_{\mathrm{obs}}}$. 
The desired estimate on the control cost~\eqref{eq:cost} follows in this framework from Theorem~2.8 (if $\kappa \geq 0$) and Theorem~2.12 (if $\kappa < 0$) in~\cite{NakicTTV-20}. 

\subsection{Random Schr{\"o}dinger operators}

In what follows, we denote by $\L_l(y)$ the cube of side length $l>0$ centered at $y\in\RR^d$.
Let $V_0$ be an admissible potential, $H_0 =-\Delta+V_0$, and let $H_\omega =H_0+V_\omega$ with $V_\omega=\sum_{j\in\ZZ^d}\omega_ju_j$,
where $(\omega_j)$ are independent and uniformly distributed random variables on $[0,1]$ and where $u_j\in L^p(\RR^d)$, $p>d/2$, is supported in $\L_1(j)$.
Moreover, we assume  that there are balls $B_j \subset \Lambda_1 (j)$ of radius $\delta>0$ such that $\indic_{B_j}\leq u_j$ for all $j\in\ZZ^d$.
We set $W =\sum_{j\in\ZZ^d}u_j$. 
For this type of random Schr{\"o}dinger operator, under suitable additional assumptions, our main result now implies~\emph{initial length scale} and~\emph{Wegner estimates}.
Such estimates are useful, for instance, in proofs of localization via multi-scale analysis, see, e.g.,~\cite{Stollmann-01,GerminetK-01,GerminetK-13} and the references therein.

Note that the statements below hold true also for more general models, but for brevity and simplicity, we only consider the model introduced here. 

\subsubsection{Initial length scale estimate} 
We assume that there exists an interval $(a,b)\subset\RR$ that, for each $L\in\NN$, $x\in\ZZ^d$ and $t\in [0,1]$, belongs to the resolvent set of the box restriction 
$H^\bullet_{0,\L_L(x)} + t  W \restriction_{\L_L(x)}$; cf.~condition (H3') in~\cite{SeelmannT-20}. 
Then, upon replacing Proposition~3.1 in~\cite{SeelmannT-20} by Theorem~\ref{thm:ucp}, in dimension $d\geq 3$, the reasoning in the last mentioned paper can easily be
adapted to show that for all $q>0$ and all $\alpha\in (0,1)$ there is some $L_0\in\NN$ such that for all $L\geq L_0$ and all $x\in\ZZ^d$ the random operator $H_\omega$ satisfies 
\[
	\PP(\sigma(H^\bullet_{\omega,\L_L(x)})\cap [b,b+L^{-\alpha})=\emptyset)\geq 1-L^{-q}.
\]

The proof relies on a delicate interplay of certain parameters. 
Roughly speaking, one needs to trade for large $G$ a lower bound for the probability of the event that at least one site $j$ in each cube of a family of cubes of 
side length $G$ satisfies $\omega_j\geq 1/2$, which is of the form $1-G^{-d}$, against the constant of the unique continuation estimate in terms of $G$.

With this in mind, in the current setting of Theorem~\ref{thm:ucp} the lower bound for the quotient $\norm{\psi}_{L^2(S_\delta)}^2 / \norm{\psi}_{L^2(\L)}^2$ 
is proportional to $\exp(-G^{2+\eps})$ for large $G$, as compared to $\exp(-G^{4/3+\eps})$ for bounded $V$ in~\cite{SeelmannT-20}.
As a consequence, the corresponding reasoning is now restricted to the case $d \geq 3 > 2 + \eps$.

It is however possible to provide a subclass of admissible potentials for which a more favorable dependence of the constant on
the scaling parameter $G$ is available and, thus, a lower bound proportional to $\exp(-G^{s+\eps})$ for large $G$ and some
$s < 2$; the latter allows also $d = 2$ in the above mentioned reasoning, as considered in~\cite{SeelmannT-20}. 
To be more precise, a subclass for which this holds is described as follows:
Suppose that $V\colon \RR^d\to\RR$ is measurable such that $V^2$ is admissible. 
Then, the considerations in Section~\ref{sec:potentials} below show that also $V$ is admissible and that there are $a,b \geq 0$ such that on every generalized 
rectangle $\Lambda$ with $S_\delta\neq\emptyset$ the constants $\lambda_1,\lambda_2$ in~\eqref{eq:form-bounded} can be chosen as $\lambda_1 = a/G$ and $\lambda_2 = Ga + b$. 
For large $G$, this leads to $G^2\lambda_1 = Ga$ and $G^{4/3}\lambda_2^{1/3} \leq G^{5/3}(a+b)^{1/3}$; hence we obtain a lower bound for
$\norm{\psi}_{L^2(S_\delta)}^2 / \norm{\psi}_{L^2(\L)}^2$ proportional to $\exp(-G^{5/3+\eps})$.

\subsubsection{Wegner estimate} 
We assume that the single-site potentials $(u_j)$ are uniformly admissible. 
By this we mean that the corresponding constants from~\eqref{eq:form-bounded} can be chosen independently of $j$.
Hence, $W$ is admissible and so is the sum $V_0+W$.
Note that $0\leq V_\omega\leq W$ implies that $V_0+V_\omega$ is also admissible.
In view of Lemma~\ref{lem:extension}\,(b) below, the corresponding constants from~\eqref{eq:form-bounded} on $\L_L$, $L\in\NN$,
can then be chosen independently of $\omega$ and $L$; we denote them by $\lambda_1$ and $\lambda_2$.

Our main result implies that for every $E_0\in\RR$ there are constants $C,\eps_0>0$ and $\tau\in (0,1)$ such that for all $\eps\in (0,\eps_0)$, all $L\in\NN$, and all $E\in\RR$ 
satisfying $E+3\eps\leq E_0$ we have
\[
	\EE\bigl (\Tr \chi_{[E-\eps,E+\eps]}(H_{\omega,\L_L}^\bullet) \bigr)\leq C\eps^\tau L^d.
\]
This is obtained by following the proof in~\cite{NakicTTV-18}. Denoting the eigenvalues of a non-negative operator $H$ with
discrete spectrum (enumerated non-decreasingly and counting multiplicities) by $E_k(H)$, $k \in \NN$, one only needs to verify
the corresponding eigenvalue lifting for admissible potentials, i.e., the fact that
\[
	E_k(H_{\omega,\L_L}^\bullet+\eps W\restriction_{\L_L})\geq E_k(H_{\omega,\L_L}^\bullet)+\eps^{1/\tau},\quad\eps\leq\eps_0,
\]
for some $\eps_0>0$, $\tau\in (0,1)$ depending only on $E_0$ and the model parameters. 
This estimate follows easily from our main result as soon as we have verified that for all $\omega$ with $E_k(H_{\omega,\L_L}^\bullet)\leq E_0$ 
we have $E_k(H_{\omega,\L_L}^\bullet+\eps W\restriction_{\L_L})\leq E_0+K$ for some constant $K\geq 0$ depending only on $E_0$, $\lambda_1$, and $\lambda_2$, 
see the proof of Theorem~2.10 in \cite{NakicTTV-18} for details.

In order to verify such an upper bound, we let $M_k\subset\Ran P_{H_{\omega,\L_L}^\bullet}(E_0)$ be the linear span of the eigenfunctions of $H_{\omega,\L_L}^\bullet$
associated to the first $k$ eigenvalues, enumerated non-decreasingly and counting multiplicities.
Then the variational characterization of eigenvalues implies
\[
	E_k(H_{\omega,\L_L}^\bullet+\eps W\restriction_{\L_L}) 
	\leq\sup_{\substack{\psi\in M_k\\\norm{\psi}_2=1}}\left[\langle\psi,H_{\omega,\L_L}^\bullet\psi \rangle_{L^2(\L_L)}
		+\langle\psi,\eps W\restriction_{\L_L}\psi\rangle_{L^2(\L_L)}\right].
\]
Now the arguments used in the proof of Lemma~\ref{lem:norm-compare} below show that there is a constant $K=K(E_0,\lambda_1,\lambda_2)$ such that for all 
normalized $\psi\in M_k$ it holds $\langle\psi,H_{\omega,\L_L}^\bullet\psi \rangle_{L^2(\L_L)}\leq E_0$ and $\langle\psi, W\restriction_{\L_L}\psi\rangle_{L^2(\L_L)} \leq K$.
Hence, we obtain the desired inequality for $E_k(H_{\omega,\L_L}^\bullet+\eps W\restriction_{\L_L})$.

It should be noted that Wegner estimates with unbounded single-site potentials already exist for more restrictive models, 
cf., e.g.,~\cite{KirschSS-98a,KirschSS-98b,CombesHN-01} or the survey~\cite{Veselic-08}. 
%
%
\section{The class of admissible potentials} \label{sec:potentials}
First we provide some properties of admissible potentials that are the core of our considerations.

\begin{lemma} \label{lem:operator-form-bnd}
	Let $V \colon \L\to\RR$ be admissible. 
	Then
	\begin{enumerate}[(a)]
		\item
		$V$ is infinitesimally operator bounded with respect to the Laplacian $\Delta_\L^\bullet$ on $L^2(\L)$, $\bullet\in\{D,N,\per\}$.
		More precisely, there are constants $a,b \geq 0$ such that
		\be
			\norm{V\psi}_{L^2(\L)}
			\leq
			a\eps \norm{\Delta_\L^\bullet\psi}_{L^2(\L)} + \Bigl( \frac{a}{\eps} + b \Bigr)\norm{\psi}_{L^2(\L)}
			\label{eq:op-bnd}
		\ee
		for all $\psi\in\cD(\Delta^{\bullet}_\L)$ and all $\eps>0$.
		\item There are constants $\lambda_1,\lambda_2\geq 0$ such that 
		\be
			\norm{V\psi}_{L^2(\L)}^2 \leq \lambda_1 \norm{\grad\psi}_{L^2(\L)}^2+\lambda_2\norm{\psi}_{L^2(\L)}^2
			\label{eq:form-bnd}
		\ee
		for all $\psi\in H^1(\L)$.
	\end{enumerate}
\end{lemma}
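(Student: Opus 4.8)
The plan is to prove part~(b) first as a direct consequence of part~(a), and then to establish part~(a) by a closed-graph argument followed by an interpolation (Ehrling-type) trick.

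For part~(b): assume~(a) holds. Given $\psi\in H^1(\L)$, one cannot directly apply~\eqref{eq:op-bnd} since $\psi$ need not lie in $\cD(\Delta_\L^\bullet)$. The standard fix is to use the identity $\norm{\grad\psi}_{L^2(\L)}^2 = \langle\psi,-\Delta_\L^\bullet\psi\rangle$ valid on the form domain, or more carefully, to note that for $\psi\in\cD(\Delta_\L^\bullet)$ one has $\norm{\grad\psi}^2 \le \norm{\psi}\,\norm{\Delta_\L^\bullet\psi}$ by Cauchy--Schwarz (using self-adjointness and the appropriate boundary conditions, which make the boundary term vanish). Squaring~\eqref{eq:op-bnd} and using $(x+y)^2\le 2x^2+2y^2$ gives
\[
  \norm{V\psi}^2 \le 2a^2\eps^2\norm{\Delta_\L^\bullet\psi}^2 + 2\bigl(\tfrac{a}{\eps}+b\bigr)^2\norm{\psi}^2
\]
for $\psi\in\cD(\Delta_\L^\bullet)$. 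One then eliminates the $\norm{\Delta_\L^\bullet\psi}^2$ term: either absorb it via a fresh application of infinitesimal bound on the form level, or more simply observe that it suffices to prove~\eqref{eq:form-bnd} on the form core $\cD(\Delta_\L^\bullet)$ and then extend by density in $H^1(\L)$, since both sides of~\eqref{eq:form-bnd} are continuous in the $H^1$-norm (the left side because $V$ is $H^1$-bounded by admissibility, the right side trivially). On $\cD(\Delta_\L^\bullet)$, combine $\norm{\Delta_\L^\bullet\psi}^2 \le (\norm{V\psi}+\norm{H_\L^\bullet\psi})^2$-type manipulations—actually cleaner: use that $\norm{\grad\psi}^2\le\norm{\psi}\,\norm{\Delta_\L^\bullet\psi}\le\tfrac12(\sigma\norm{\Delta_\L^\bullet\psi}^2+\sigma^{-1}\norm{\psi}^2)$ and feed this back, choosing $\eps$ and $\sigma$ appropriately to read off $\lambda_1,\lambda_2$ in terms of $a,b$.

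For part~(a): the starting point is that by hypothesis $\cD(V)\supset H^1(\L)\supset\cD(\Delta_\L^\bullet)$, so $V$ maps $\cD(\Delta_\L^\bullet)$ into $L^2(\L)$. Equip $\cD(\Delta_\L^\bullet)$ with the graph norm of $\Delta_\L^\bullet$; it is a Banach space. The map $\psi\mapsto V\psi$ from $(\cD(\Delta_\L^\bullet),\|\cdot\|_{\mathrm{graph}})$ to $L^2(\L)$ has closed graph (if $\psi_n\to\psi$ in graph norm and $V\psi_n\to g$ in $L^2$, then $\psi_n\to\psi$ in $L^2$, so a subsequence converges a.e., whence $V\psi_n\to V\psi$ a.e., forcing $g=V\psi$; here one uses measurability of $V$ and that $V\psi\in L^2$ by admissibility applied to $\psi$). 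By the closed graph theorem there is $C\ge0$ with
\[
  \norm{V\psi}_{L^2(\L)} \le C\bigl(\norm{\Delta_\L^\bullet\psi}_{L^2(\L)} + \norm{\psi}_{L^2(\L)}\bigr).
\]
This is an $\Delta$-bound with relative bound $C$, but not yet \emph{infinitesimally} small. To upgrade, one invokes the standard interpolation inequality for the Laplacian on $\L$ with the chosen boundary conditions—namely that $\cD(\Delta_\L^\bullet)\subset H^2(\L)$ (guaranteed here, as the paper notes, e.g.\ by~\cite[Example~4.2]{Seelmann-20}) together with the scaled estimate $\norm{\psi}_{H^1}\le \eps\norm{\Delta_\L^\bullet\psi} + C_\eps\norm{\psi}$—or, even more directly, the interpolation $\norm{u}_{H^1}^2\le\norm{u}_{L^2}\norm{u}_{H^2}$ combined with $\norm{u}_{H^2}\lesssim\norm{\Delta_\L^\bullet u}+\norm{u}$. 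Since $V$ is already bounded relative to $H^1$ by admissibility via~\eqref{eq:form-bnd}—careful, this is circular if used to prove~(b) from~(a); instead I use only the raw closed-graph bound above relative to the $H^2$-norm and then the self-improvement. Concretely: the closed graph theorem applied with the $H^2(\L)$-norm (or the graph norm, which is equivalent to it by the regularity $\cD(\Delta_\L^\bullet)\subset H^2(\L)$ plus the open mapping theorem) gives $\norm{V\psi}\le C\norm{\psi}_{H^2(\L)}$; the interpolation inequality $\norm{\psi}_{H^2}$ does not help to make the bound small by itself, but the decomposition $\norm{V\psi}\le C(\norm{\Delta_\L^\bullet\psi}+\norm{\psi})$ together with an \emph{a priori} bound of the form $\norm{V\psi}^2\le\lambda_1\norm{\grad\psi}^2+\lambda_2\norm\psi^2$—wait, that is~(b).

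Let me restructure: the cleanest logical order is (i) closed graph theorem $\Rightarrow$ $V$ is $\Delta_\L^\bullet$-bounded with \emph{some} finite relative bound, (ii) a scaling/dilation argument on $\L$—or the elementary interpolation $ab\le\tfrac{\eps}{2}a^2+\tfrac{1}{2\eps}b^2$ applied inside the estimate $\norm{\grad\psi}^2\le\norm{\psi}\norm{\Delta_\L^\bullet\psi}$—to convert any finite-relative-bound estimate $\norm{V\psi}\le C\norm{\Delta_\L^\bullet\psi}+C'\norm{\psi}$ into~\eqref{eq:op-bnd} with $a\eps$ arbitrarily small. The second step works because once we know $V$ is $\Delta_\L^\bullet$-bounded, we can also bound it in terms of the \emph{first} derivative: writing $\norm{\grad\psi}_{L^2(\L)}^2 = \langle\psi,-\Delta_\L^\bullet\psi\rangle\le\norm\psi\,\norm{\Delta_\L^\bullet\psi}$, and the standard fact (Kato, or direct spectral calculus) that $\Delta$-boundedness with finite relative bound implies an estimate $\norm{V\psi}^2\le\lambda_1\norm{\grad\psi}^2+\lambda_2\norm\psi^2$, which in turn—again by $\norm{\grad\psi}^2\le\tfrac{\eps^2}{2}\norm{\Delta_\L^\bullet\psi}^2 + \tfrac{1}{2\eps^2}\norm\psi^2$—yields~\eqref{eq:op-bnd} with the relative bound $a\eps$ as small as we like.

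The main obstacle I expect is the self-improvement from \emph{finite} to \emph{infinitesimal} relative bound: the closed graph theorem gives finiteness for free, but infinitesimality requires the interpolation between $L^2$, $H^1$, and $H^2$ on the generalized rectangle $\L$ with the relevant boundary conditions, and one must be slightly careful that the $H^2$-regularity $\cD(\Delta_\L^\bullet)\subset H^2(\L)$ holds and that the interpolation constants are controlled (ideally uniformly, for the scale-free applications, though for Lemma~\ref{lem:operator-form-bnd} itself no uniformity is claimed). Everything else—measurability giving the closed graph, Cauchy--Schwarz for the first-derivative bound, squaring and re-optimizing $\eps$—is routine.
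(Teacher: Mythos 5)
There is a genuine gap, and it lies in the direction of the logical dependency you set up between (a) and (b). You propose to obtain (a) first, via the closed graph theorem for $V$ on $\cD(\Delta_\L^\bullet)$ equipped with the graph norm, and then to deduce (b) from (a). Neither half works. The closed graph theorem applied to the pair $(V,\Delta_\L^\bullet)$ uses only the inclusion $\cD(\Delta_\L^\bullet)\subset\cD(V)$ and yields only a \emph{finite} relative bound; your upgrade to an \emph{infinitesimal} bound, and likewise your derivation of (b), both rest on the claimed ``standard fact'' that $\Delta_\L^\bullet$-boundedness of $V$ implies $\norm{V\psi}^2\le\lambda_1\norm{\grad\psi}^2+\lambda_2\norm{\psi}^2$. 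That fact is false: for $d\ge 5$ and $d/2<p<d$, a potential $V\in L^p(\L)$ satisfies $\cD(\Delta_\L^\bullet)\subset H^2(\L)\subset\cD(V)$ (Sobolev embedding plus H\"older), hence is $\Delta_\L^\bullet$-bounded, yet in general $H^1(\L)\not\subset\cD(V)$, so no estimate of the form \eqref{eq:form-bnd} can hold. The underlying reason is that the graph norm of $\Delta_\L^\bullet$ strictly dominates the $H^1$-norm; all the manipulations you sketch are built on $\norm{\grad\psi}^2\le\norm{\psi}\,\norm{\Delta_\L^\bullet\psi}$, which bounds the gradient term by the Laplacian term and can therefore never eliminate $\norm{\Delta_\L^\bullet\psi}^2$ from the right-hand side in favour of $\norm{\grad\psi}^2$. (Your fallback of proving \eqref{eq:form-bnd} on $\cD(\Delta_\L^\bullet)$ and extending by density also fails in the Dirichlet case, where $\cD(\Delta_\L^D)$ is dense in $H^1_0(\L)$ but not in $H^1(\L)$, and its justification of $H^1$-continuity of the left-hand side already presupposes (b).)

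The repair is to reverse the order, which is what the paper does. Admissibility says precisely that $H^1(\L)=\cD(\grad)\subset\cD(V)$, where $\grad$ is the closed gradient operator from $L^2(\L)$ to $L^2(\L)^d$; your closed-graph argument (a.e.\ convergence of a subsequence, maximal multiplication operators are closed) applied to the pair $(V,\grad)$ instead of $(V,\Delta_\L^\bullet)$ gives \eqref{eq:form-bnd} directly --- this is \cite[Remark~IV.1.5]{Kato-80}, and it is the step that uses the full strength of the hypothesis. Then (a) follows from (b): for $\psi\in\cD(\Delta_\L^\bullet)$ one has $\norm{\grad\psi}^2=\langle\psi,-\Delta_\L^\bullet\psi\rangle\le\norm{\psi}\,\norm{\Delta_\L^\bullet\psi}\le\tfrac{\eps^2}{2}\norm{\Delta_\L^\bullet\psi}^2+\tfrac{1}{2\eps^2}\norm{\psi}^2$, and inserting this into \eqref{eq:form-bnd} and taking square roots yields \eqref{eq:op-bnd}; this is exactly the content of the abstract result \cite[Corollary~2.1.20]{Tretter-08} (the inclusion $\cD((-\Delta_\L^\bullet)^{1/2})\subset\cD(V)$ implies infinitesimal operator boundedness) that the paper invokes for part (a). Your individual ingredients --- closed graph theorem, Cauchy--Schwarz, Young's inequality --- are the right ones; only the order in which they are assembled must be corrected.
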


\begin{proof}[Proof of Lemma~\ref{lem:operator-form-bnd}]
	(a)
	We clearly have $\cD((-\Delta_\L^\bullet)^{1/2}) \subset H^1(\Lambda) \subset \cD(V)$. 
	Hence, the claim follows from the more general statement in~\cite[Corollary 2.1.20]{Tretter-08} and its proof.
	
	(b)
	We have $\cD(\nabla) = H^1(\L) \subset \cD(V)$, where $\nabla$ denotes the gradient as a closed operator in $H^1(\L)$. 
	This implies that $V$ is relatively bounded with respect to the gradient, see, e.g.,~\cite[Remark~IV.1.5]{Kato-80}, which agrees with the claim.
\end{proof}

Note that Lemma~\ref{lem:operator-form-bnd}\,(a) ensures that the operator sum $H_\L^\bullet = -\Delta_\L^\bullet + V$ with an admissible 
potential $V$ is self-adjoint on $\cD(\Delta_\L^\bullet)$ and lower semibounded.

\begin{remark} \label{rem:admissible-potentials}
	(1)
	$V$ is admissible if $V^2$ is form bounded with respect to the Neumann Laplacian $-\Delta_\L^N$. 
	Indeed, an inequality like~\eqref{eq:form-bnd} then holds for all $\psi \in \cD(\Delta_L^N) \subset \cD(V^2)$, 
	which extends to \eqref{eq:form-bnd} by taking the closure for $\grad$.

	(2)
	If $V \colon \L \to \RR$ is measurable with $\cD(\Lambda_\L^N) \subset \cD(V^2)$, then $V$ is admissible.
	Indeed, in this case, $V^2$ is $\Delta_\L^N$-bounded, see, e.g.,~\cite[Remark~IV.1.5]{Kato-80}. 
	The claim follows from the fact that $V^2$ is also form bounded with respect to $-\Delta_\L^N$, see, e.g.,~\cite[Theorem~VI.1.38]{Kato-80}, and part (1).
\end{remark} 

As a consequence of the above considerations, we obtain the following result, which describes a subclass of admissible potentials
that allows for a more explicit control of the involved constants $\lambda_1$ and $\lambda_2$.

\begin{corollary}
	Let $V \colon \Lambda \to \RR$ be measurable and assume that $V^2$ is admissible. 
	Then $V$ is admissible, and there are $a,b \geq 0$ such that for every $\eps > 0$ the corresponding constants $\lambda_1$, $\lambda_2$ from
	Lemma~\ref{lem:operator-form-bnd}\,(b) can be chosen as $\lambda_1 = a\eps$ and $\lambda_2 = a/\eps + b$.
\end{corollary}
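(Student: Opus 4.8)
The plan is to derive both assertions from Lemma~\ref{lem:operator-form-bnd}\,(b) applied to $V^2$, combined with two elementary inequalities: Cauchy--Schwarz, used to pass from $V^2$ to $V$, and Young's inequality, used to redistribute the resulting gradient term with the prescribed dependence on $\eps$.

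First I would check that $V$ is admissible. For $\psi\in L^2(\L)$ with $V^2\psi\in L^2(\L)$ the pointwise identity $\abs{V}^2\abs{\psi}^2=(\abs{V}^2\abs{\psi})\,\abs{\psi}$ together with the Cauchy--Schwarz inequality gives $\norm{V\psi}_{L^2(\L)}^2\le\norm{V^2\psi}_{L^2(\L)}\,\norm{\psi}_{L^2(\L)}<\infty$, so that $\cD(V^2)\subset\cD(V)$. Since $V^2$ is admissible we have $H^1(\L)\subset\cD(V^2)\subset\cD(V)$, and hence $V$ is admissible.

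Next I would invoke Lemma~\ref{lem:operator-form-bnd}\,(b) for the admissible potential $V^2$ to obtain constants $\mu_1,\mu_2\ge 0$ with $\norm{V^2\psi}_{L^2(\L)}^2\le\mu_1\norm{\grad\psi}_{L^2(\L)}^2+\mu_2\norm{\psi}_{L^2(\L)}^2$ for all $\psi\in H^1(\L)$, and therefore $\norm{V^2\psi}_{L^2(\L)}\le\sqrt{\mu_1}\,\norm{\grad\psi}_{L^2(\L)}+\sqrt{\mu_2}\,\norm{\psi}_{L^2(\L)}$. Feeding this into the Cauchy--Schwarz bound from the previous step yields $\norm{V\psi}_{L^2(\L)}^2\le\sqrt{\mu_1}\,\norm{\grad\psi}_{L^2(\L)}\norm{\psi}_{L^2(\L)}+\sqrt{\mu_2}\,\norm{\psi}_{L^2(\L)}^2$. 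For each $\eps>0$ I would then estimate the mixed term by Young's inequality, $\sqrt{\mu_1}\,\norm{\grad\psi}_{L^2(\L)}\norm{\psi}_{L^2(\L)}\le\tfrac{\eps\sqrt{\mu_1}}{2}\norm{\grad\psi}_{L^2(\L)}^2+\tfrac{\sqrt{\mu_1}}{2\eps}\norm{\psi}_{L^2(\L)}^2$, and collect terms. Setting $a:=\sqrt{\mu_1}/2$ and $b:=\sqrt{\mu_2}$ then makes the right-hand side exactly $a\eps\norm{\grad\psi}_{L^2(\L)}^2+(a/\eps+b)\norm{\psi}_{L^2(\L)}^2$, which is the asserted form of the constants $\lambda_1,\lambda_2$ from Lemma~\ref{lem:operator-form-bnd}\,(b).

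I do not expect a genuine obstacle here; the argument is a short interpolation. The only point deserving attention is the bookkeeping in Young's inequality, where the splitting parameter has to be chosen (namely equal to $\eps$) so that the coefficients of $\norm{\grad\psi}_{L^2(\L)}^2$ and $\norm{\psi}_{L^2(\L)}^2$ come out as $a\eps$ and $a/\eps$ with one and the same constant $a$, as required. One could alternatively start from the operator bound of Lemma~\ref{lem:operator-form-bnd}\,(a) for $V^2$, but that brings in $\Delta_\L^\bullet$ in place of $\grad$ and is less convenient for the form-type conclusion wanted here.
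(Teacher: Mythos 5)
Your proof is correct, and it takes a more direct route than the paper's. The paper starts from Lemma~\ref{lem:operator-form-bnd}\,(a) applied to $V^2$ (infinitesimal \emph{operator} boundedness with respect to $\Delta_\Lambda^N$), converts this to a form bound via the proof of Kato's Theorem~VI.1.38, and then invokes Remark~\ref{rem:admissible-potentials}\,(1) to extend the resulting inequality from $\cD(\Delta_\Lambda^N)$ to all of $H^1(\Lambda)$ by a closure argument; the $\eps$-dependence of $\lambda_1,\lambda_2$ is inherited from the $\eps$ in the operator bound~\eqref{eq:op-bnd}. You instead apply Lemma~\ref{lem:operator-form-bnd}\,(b) to $V^2$ directly on $H^1(\Lambda)$ and manufacture the $\eps$-dependence afterwards via Young's inequality, so you never leave $H^1(\Lambda)$ and need neither the Neumann Laplacian nor the closure step. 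The common core of both arguments is the Cauchy--Schwarz estimate $\norm{V\psi}_{L^2(\Lambda)}^2 \le \norm{V^2\psi}_{L^2(\Lambda)}\norm{\psi}_{L^2(\Lambda)}$, which is exactly what drives Kato's proof as well; your admissibility argument ($H^1(\Lambda)\subset\cD(V^2)\subset\cD(V)$) also exploits the full strength of the hypothesis that $V^2$ is admissible, rather than only its form boundedness, which is why you can bypass the remark. What the paper's route buys is uniformity with the machinery already set up in Lemma~\ref{lem:operator-form-bnd}\,(a) and Remark~\ref{rem:admissible-potentials}; what yours buys is a shorter, self-contained interpolation with explicit constants $a=\sqrt{\mu_1}/2$, $b=\sqrt{\mu_2}$. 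Both yield the claimed form $\lambda_1=a\eps$, $\lambda_2=a/\eps+b$.
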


\begin{proof}
	We already know from Lemma~\ref{lem:operator-form-bnd}\,(a) that $V^2$ is infinitesimally operator bounded with respect to $\Delta_\Lambda^N$. 
	Let $\tilde{a},\tilde{b} \geq 0$ denote the corresponding constants in~\eqref{eq:op-bnd}.
	Following the proof of~\cite[Theorem~VI.1.38]{Kato-80}, we obtain that $V^2$ is form bounded with respect to
	$-\Delta_\Lambda^N$ with
	\[
		\norm{V\psi}_{L^2(\L)}^2
		\le
		2\tilde{a}\eps \langle (-\Delta_\L^N)\psi , \psi \rangle_{L^2(\L)} + \Bigl( \frac{2\tilde{a}}{\eps} + 2\tilde{b} \Bigr)
		\norm{\psi}_{L^2(\L)}^2
	\]
	for all $\psi \in \cD(\Delta_\L^N) \subset \cD(V^2)$ and all $\eps > 0$. 
	The claim with $a = 2\tilde{a}$ and $b = 2\tilde{b}$ now follows from Remark~\ref{rem:admissible-potentials}\,(1).
\end{proof}%

We now give some examples of admissible potentials. 

\begin{example}\label{ex:admissible-potentials}
	
	(1) Every real-valued $V \in L^\infty(\L)$ is admissible.

	(2) It follows from~\cite[Lemma~2.1]{KirschM-82c} that $V^2$ is form bounded with respect to $-\Delta^N_\L$ if $V\in L^p(\L)$ with
		$p\geq d$ for $d\geq 3$, $p>2$ for $d=2$ and $p=2$ for $d=1$.  
		Therefore, by Remark \ref{rem:admissible-potentials}\,(1), such potentials are admissible. 
	
	(3) If $V \colon \RR^d \to \RR$ is measurable such that $V^2$ belongs to the Kato class in $\RR^d$ (see, e.g.,~\cite{AizenmanS-82} or~\cite[Section~1.2]{CyconFKS-87} 
		for a discussion), then $V^2$ is infinitesimally form bounded with respect to the Laplacian.
		Hence $V$ is admissible on $\RR^d$ by Remark \ref{rem:admissible-potentials}\,(1). 
		In particular, this is the case if $V$ belongs to $L_{\text{loc},\text{unif}}^p(\RR^d)$ with $p > d$ for $d \geq 2$ and $p = 4$ for $d = 1$.
\end{example}

Since the admissible potentials clearly form a vector space, also sums of potentials from (1) and (2) (resp.~(1) and (3)) in Example \ref{ex:admissible-potentials} are admissible. 
Therefore, in dimension $d\geq 2$, this class essentially covers the singular potentials considered in~\cite{KleinT-16a}. 
However, potentials of the type (3) in the previous example have not been discussed in the latter paper. 

Obviously, we can always trivially extend an admissible potential on some generalized rectangle to the whole 
of $\RR^d$, and the corresponding constants $\lambda_1$ and $\lambda_2$ from Lemma~\ref{lem:operator-form-bnd}\,(b) 
carry over. 
However, for the considerations in Section~\ref{sec:UCP} below, this way of extension
does not retain enough information of the potential on $\RR^d \setminus \L$. 
We first need to extend it in a more sophisticated way to a large enough generalized rectangle containing $\L$. 
More precisely, if $\L = \bigtimes_{j=1}^d (a_j,b_j) \neq \RR^d$, we extend $V$ to a potential $\tilde{V} \colon \tilde{\L} \to \RR$ on
\[
	\tilde{\L} = \bigtimes_{j=1}^d (\tilde{a}_j , \tilde{b}_j)
	\quad\text{ with }\quad
	(\tilde{a}_j , \tilde{b}_j) = (a_j - (b_j-a_j) , b_j + (b_j-a_j))
\]
by symmetric reflections with respect to the boundary hyperplanes of $\L$ in case of Dirichlet or Neumann boundary conditions and
periodically in case of (quasi-)periodic boundary conditions; if we work with mixed boundary conditions, this extension has to be
done in each coordinate separately.

The next lemma shows that the extended potential is admissible (on $\tilde{\L}$) with the same constants.
This ensures that the procedure may be repeated as many times as needed before extending trivially to the whole of $\RR^d$.

\begin{lemma}\label{lem:extension}
	\begin{enumerate}[(a)]
	\item Let $V \colon \L \to \RR$ be admissible, and define $\tilde{\L}$ and $\tilde{V} \colon \tilde{\L} \to \RR$ as above. 
		Then, $\tilde{V}$ is admissible on $\tilde{\L}$, and the corresponding constants $\lambda_1,\lambda_2$ 
		from Lemma~\ref{lem:operator-form-bnd}\,(b) agree with those of $V$ on $\L$.
	
	\item Let $V \colon \RR^d\to \RR$ be admissible on $\RR^d$ and $\L$ be a generalized rectangle containing a cube of side length $L_0>0$. 
		Then $V\restriction_\L$ is admissible on $\L$, and the corresponding constants from Lemma~\ref{lem:operator-form-bnd}\,(b)
		can be chosen such that they depend only on $L_0$, $d$, and the constants $\lambda_1,\lambda_2$ associated to $V$ on $\RR^d$.
	\end{enumerate} 
\end{lemma}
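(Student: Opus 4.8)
The plan is to reduce everything to the defining inequality~\eqref{eq:form-bnd}, $\norm{V\psi}_{L^2}^2 \le \lambda_1\norm{\grad\psi}_{L^2}^2 + \lambda_2\norm{\psi}_{L^2}^2$, and to verify it directly on test functions.

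For part (a), I would fix $\psi \in H^1(\tilde\L)$ and exploit the product structure of the reflection/periodic extension. Write $\tilde\L$ as the disjoint union (up to null sets) of the $3^d$ translated-and-reflected copies of $\L$ obtained by the construction, and observe that by the symmetry of the construction $\tilde V$ restricted to each such copy is, after the corresponding reflection, exactly $V$ on $\L$. The key point is that the reflection map $\rho$ across a boundary hyperplane is an (orientation-reversing) isometry, so it maps $H^1(\L)$ onto $H^1(\rho(\L))$ preserving both $\norm{\grad\,\cdot\,}_{L^2}$ and $\norm{\cdot}_{L^2}$, and likewise for the periodic translation. Hence, summing the inequality~\eqref{eq:form-bnd} for $V$ over each piece,
\[
	\norm{\tilde V\psi}_{L^2(\tilde\L)}^2
	= \sum_{\text{pieces } P} \norm{V\,(\psi\circ\rho_P^{-1})}_{L^2(\L)}^2
	\le \sum_{P}\Bigl(\lambda_1\norm{\grad(\psi\circ\rho_P^{-1})}_{L^2(\L)}^2 + \lambda_2\norm{\psi\circ\rho_P^{-1}}_{L^2(\L)}^2\Bigr)
	= \lambda_1\norm{\grad\psi}_{L^2(\tilde\L)}^2 + \lambda_2\norm{\psi}_{L^2(\tilde\L)}^2,
\]
which is exactly~\eqref{eq:form-bnd} on $\tilde\L$ with the same $\lambda_1,\lambda_2$; in particular $H^1(\tilde\L)\subset\cD(\tilde V)$, so $\tilde V$ is admissible. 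The one technicality is to justify that $\psi\circ\rho_P^{-1}$ really lies in $H^1(\L)$ with the gradient transforming as claimed, and that the pieces glue measurably — but this is standard for Lipschitz changes of variables on rectangles, so I would only sketch it. If mixed boundary conditions are used, the extension is done coordinate by coordinate and the same tensorial bookkeeping applies.

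For part (b), I would cover the generalized rectangle $\L$ by translates of a fixed cube. Since $\L$ contains a cube of side length $L_0$, one can write $\L$ as a countable union of (open) cubes of side length $L_0$ with pairwise disjoint interiors, translated to lattice points, each of which is contained in $\L$; a partition into slabs in each coordinate does this. For $\psi\in H^1(\L)$, restrict to each such cube $Q$, apply the hypothesis that $V$ is admissible on $\RR^d$ — after a trivial extension, $\psi\restriction_Q\in H^1(Q)$ and, because $V\restriction_Q$ inherits an inequality of the form~\eqref{eq:form-bnd} on $Q$ from $V$ on $\RR^d$ (the constants for a cube follow by part (a)-type reflection from $\RR^d$, or simply because $H^1(\RR^d)\subset\cD(V)$ restricts), one gets $\norm{V\psi}_{L^2(Q)}^2 \le \lambda_1\norm{\grad\psi}_{L^2(Q)}^2 + \lambda_2\norm{\psi}_{L^2(Q)}^2$ with constants depending only on $L_0,d,\lambda_1,\lambda_2$. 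Summing over the cubes in the cover gives the same inequality on all of $\L$, hence $H^1(\L)\subset\cD(V\restriction_\L)$ and admissibility, with constants of the advertised dependence. The subtlety is that restriction of an $L^2_{\mathrm{loc}}$-multiplier to a subdomain with the asserted uniform constant is not automatic unless one passes through the form/operator bound on a fixed reference cube; the honest argument is to note that the constants $a,b$ in~\eqref{eq:op-bnd} / $\lambda_1,\lambda_2$ in~\eqref{eq:form-bnd} for a bounded cube can be extracted by an isometric reflection from those on $\RR^d$ exactly as in part (a), so they transfer with only a dimensional factor.

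The main obstacle I anticipate is purely bookkeeping: making the tensor/reflection decomposition of $\tilde\L$ (and the cube cover of $\L$) precise enough that the change-of-variables identities for $\norm{\grad\,\cdot\,}$ hold on the nose, particularly under mixed boundary conditions where different coordinates are reflected versus translated. There is no genuine analytic difficulty beyond invariance of the $H^1$-norm under isometries; the work is in organizing the sum so that the constants are manifestly unchanged in (a) and manifestly $L_0$-dependent only in (b).
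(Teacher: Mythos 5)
Your part (a) is essentially the paper's proof: decompose $\tilde{\L}$ into the finitely many reflected/translated copies of $\L$, use that each affine map is an isometry preserving the $H^1$-data, apply \eqref{eq:form-bnd} on each piece, and sum. That is correct and complete up to the standard change-of-variables facts you defer.

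Part (b), however, has a genuine gap. The step you need --- that $V$ admissible on $\RR^d$ with constants $\lambda_1,\lambda_2$ yields an inequality of the form \eqref{eq:form-bnd} on a fixed cube $Q$ (or on $\L$) with controlled constants --- is exactly the content of part (b) itself, and your two proposed justifications do not supply it. Saying that ``$H^1(\RR^d)\subset\cD(V)$ restricts'' is not enough: the hypothesis only gives $\norm{V\phi}_{L^2(\RR^d)}^2\le\lambda_1\norm{\grad\phi}_{L^2(\RR^d)}^2+\lambda_2\norm{\phi}_{L^2(\RR^d)}^2$ for $\phi\in H^1(\RR^d)$, and a function $\psi\in H^1(Q)$ is not such a $\phi$; to apply the hypothesis you must first extend $\psi$ to an $H^1(\RR^d)$ function, and the cost of that extension is precisely what produces the $L_0$- and $d$-dependence in the conclusion. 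Appealing to ``part (a)-type reflection'' also goes the wrong way: part (a) extends the \emph{potential} outward from a smaller rectangle to a larger one with the test function already given on the larger set; it does not restrict a bound from $\RR^d$ down to a bounded domain. The paper's argument is the missing piece: extend $\psi$ from $\L$ to $\tilde{\L}$ by symmetric reflections, multiply by a smooth cutoff $\phi$ with $\phi\equiv1$ on $\L$, $\supp\phi\subset\tilde{\L}$, and $\norm{\grad\phi}_\infty^2\le(2/L_0)^2$, apply the $\RR^d$-inequality to $\phi\psi\in H^1(\RR^d)$, and then bound $\norm{\grad\psi}_{L^2(\tilde{\L})}^2$ and $\norm{\psi}_{L^2(\tilde{\L})}^2$ by $3^d$ times the corresponding norms on $\L$; this yields $\lambda_1\mapsto 2\cdot3^d\lambda_1$ and $\lambda_2\mapsto 3^d(8\lambda_1/L_0^2+\lambda_2)$. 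Your cube-covering scaffolding is then unnecessary (and, as stated, a generalized rectangle need not decompose exactly into disjoint cubes of side $L_0$); once the restriction step is done honestly for one bounded set, it is done for $\L$ directly.
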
 

\begin{proof}
	(a) Let $\psi \in H^1(\tilde{\L})$. 
	By construction of $\tilde{\L}$ and $\tilde{V}$, there are affine transformations $R_1, \dots, R_l \colon \RR^d \to \RR^d$, $l \in \NN$, each $R_j$ being either a reflection	with respect to a boundary
	hyperplane of $\L$ or a translation, such that $L^2(\tilde{\L}) = L^2(\L) \oplus \bigoplus_{j=1}^l L^2(R_j\L)$ and $\tilde{V}\circ R_j \restriction_\L = V$. 
	For each $j \in \{ 1, \dots, l\}$ we then have $\psi \circ R_j \restriction_\L \in H^1(\L)$ and, therefore,
	\eqs{
		\lVert \tilde{V} \psi \rVert_{L^2(R_j\L)}^2
		=\norm{V (\psi \circ R_j)}_{L^2(\L)}^2
		&\leq \lambda_1\norm{(\grad\psi)\circ R_j}_{L^2(\L)}^2+\lambda_2\norm{\psi\circ R_j}_{L^2(\L)}^2\\
		&= \lambda_1\norm{\grad\psi}_{L^2(R_j\L)}^2+\lambda_2\norm{\psi}_{L^2(R_j\L)}^2.
	}
	Summing over $j$ concludes the proof. 

	(b) Let $\psi \in H^1(\L)$ and let $\tilde{\L}$ be as above. 
	We extend $\psi$ to $\tilde{\L}$ by symmetric reflections with respect to the boundary surfaces of $\L$.
	Now, let $\phi\in C^\infty_{c}(\RR^d)$ be a smooth function with values in $[0,1]$ satisfying $\phi\equiv 1$ on $\L$, $\phi\equiv 0$ 
	on $\RR^d\setminus \{x\in\RR^d:\dist(x,\L)<L_0/2\}$, and $\norm{\grad\phi}_\infty^2\leq (2/L_0)^2$. 
	Then $\supp\phi \subset\tilde{\L}$, $\phi\psi\in H^1(\RR^d)$, and we obtain
	\eqs{
		\norm{V\psi}_{L^2(\L)}^2 &\leq \norm{V\phi\psi}_{L^2(\RR^d)}^2 \leq \lambda_1\norm{\grad(\phi\psi)}_{L^2(\RR^d)}^2 + \lambda_2\norm{\phi\psi}_{L^2(\RR^d)}^2\\
		&\leq 2\lambda_1(\norm{\grad\psi}_{L^2(\tilde{\L})}^2+\norm{\grad\phi}_\infty^2\norm{\psi}_{L^2(\tilde{\L})}^2)
		+ \lambda_2\norm{\psi}_{L^2(\tilde{\L})}^2 \\
		& \leq 2\cdot3^d\lambda_1 \norm{\grad\psi}_{L^2(\L)}^2 + 3^d(8\lambda_1/L_0^2+\lambda_2)\norm{\psi}_{L^2(\L)}^2.
		\qedhere
	}
\end{proof}

%
%
\section{Carleman estimates with an admissible potential}  \label{sec:Carleman}

In this section we present two Carleman estimates valid for Schr{\"o}dinger operators with admissible potentials. 
Both Carleman estimates improve or complement earlier results in the literature. 
The first Carleman estimate, cf.~Section~\ref{ssec:Carleman1}, goes back to~\cite{Vessella-03,EscauriazaV-03}, where an inequality of this kind 
is proven for a class of second order parabolic operators. 
In the elliptic setting, quantitative versions are proven for the pure Laplacian in~\cite{BourgainK-05,KleinT-16a}, and for second order elliptic 
operators in~\cite{NakicRT-19}. 
The second Carleman estimate, cf.~Section~\ref{ssec:Carleman2}, complements the Carleman estimate of~\cite{LebeauR-95} where 
second order elliptic operators are considered. 

Roughly speaking, our main observation is that we can add an admissible potential in an existing Carleman estimate. 
For our purposes, that is for the proof of Theorem~\ref{thm:ucp}, we implement this for the Carleman estimate given in~\cite{KleinT-16a} 
and a special case of the one in~\cite{LebeauR-95}. 

In the following, we denote by $\grad_{d+1}$ and $\Delta_{d+1}$ the gradient and the Laplacian on $\RR^{d+1}$, while 
$\grad$ and $\Delta$ denote the corresponding expressions on $\RR^d$.
By abusing notation slightly, for admissible $V\colon\RR^d\to\RR$ we use the same symbol to denote $V\colon\RR^d\times\RR\to\RR$ with $V(x,t) =V(x)$, $t\in\RR$.

\subsection{Generalization of the first Carleman estimate}  \label{ssec:Carleman1}

Let $\rho>0$, and define on $\RR^{d+1}$ the weight function $w$ by
\[
	w(y)=\phi(\lvert y \rvert/\rho),\quad \phi(r)=r\exp\lr -\int_0^r\frac{1-\e^{-t}}{t}\Diff{t}\rr.
\]
For future reference we note that $w$ satisfies 
\be
	\lvert y \rvert/(\rho\e)\leq w(y)\leq \lvert y \rvert/\rho\quad\text{and}\quad \lvert \grad w(y) \rvert^2 \leq w^2(y) / \lvert y \rvert^2 \leq 1 / \rho^2 
	\label{eq:Carleman-NRT-weight-est}
\ee
for all $y\in B_\rho(0)\setminus\{0\}$.

\begin{theorem} \label{thm:Carleman-NRT}
	Let $V \colon \RR^d \to \RR$ be admissible. 
	Then there are constants $\alpha_0,C_0\geq 1$ such that for all $\alpha\geq\alpha_0$ and all $\Psi\in H^2(\RR^{d+1})$ with support in $B_\rho(0)\setminus\{0\}$ we have
	\eq{
		\begin{split}
			\int_{\RR^{d+1}}\alpha\rho^2w^{1-2\alpha} \lvert\grad_{d+1}\Psi\rvert^2 &+ \alpha^3w^{-1-2\alpha}\lvert \Psi\rvert^2 \\
			&\leq C_0\rho^4\int_{\RR^{d+1}}w^{2-2\alpha}\lvert(-\Delta_{d+1}+V)\Psi\rvert^2.
		\end{split} 
		\label{eq:Carleman-est-NRT-Potential}
	}
	The dependency of $\alpha_0$ on $V$ and $\rho$ is quantified in~\eqref{eq:Carleman-NRT-const} below.
\end{theorem}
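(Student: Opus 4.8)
The plan is to start from an existing Carleman estimate for the pure Laplacian $-\Delta_{d+1}$ with the same weight $w$, and then absorb the admissible potential $V$ into the right-hand side using the infinitesimal relative bound from Lemma~\ref{lem:operator-form-bnd}. Concretely, the known elliptic Carleman estimate (as in~\cite{KleinT-16a}, or the version underlying~\cite{NakicRT-19}) provides constants $\tilde\alpha_0,\tilde C_0\geq 1$ such that
\[
	\int_{\RR^{d+1}}\alpha\rho^2 w^{1-2\alpha}\lvert\grad_{d+1}\Psi\rvert^2 + \alpha^3 w^{-1-2\alpha}\lvert\Psi\rvert^2
	\leq
	\tilde C_0\rho^4\int_{\RR^{d+1}}w^{2-2\alpha}\lvert\Delta_{d+1}\Psi\rvert^2
\]
for all $\alpha\geq\tilde\alpha_0$ and all $\Psi\in H^2(\RR^{d+1})$ supported in $B_\rho(0)\setminus\{0\}$. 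Since $\lvert(-\Delta_{d+1}+V)\Psi\rvert^2\leq 2\lvert\Delta_{d+1}\Psi\rvert^2+2\lvert V\Psi\rvert^2$ (with $V$ acting only in the $x$-variables), the task reduces to controlling $\rho^4\int w^{2-2\alpha}\lvert V\Psi\rvert^2$ by a small multiple of the left-hand side plus a multiple of $\rho^4\int w^{2-2\alpha}\lvert(-\Delta_{d+1}+V)\Psi\rvert^2$, so that the potential term can be absorbed.

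The key step is therefore a weighted version of the relative bound. Fix a point in the support and work on the level sets of $w$; the idea is to apply~\eqref{eq:form-bnd} (or the operator bound~\eqref{eq:op-bnd}) not to $\Psi$ directly but to the rescaled function $\Phi = w^{1-\alpha}\Psi$, for which the weight essentially disappears from the estimate. One computes $\grad_{d+1}\Phi = w^{1-\alpha}\grad_{d+1}\Psi + (1-\alpha)w^{-\alpha}(\grad_{d+1}w)\Psi$; using the pointwise bounds $\lvert\grad w\rvert\leq 1/\rho$ and $w\sim\lvert y\rvert/\rho$ from~\eqref{eq:Carleman-NRT-weight-est}, the second term is of size $\alpha\rho^{-1}w^{-\alpha}\lvert\Psi\rvert = \alpha\rho^{-1}w^{-1}\cdot w^{1-\alpha}\lvert\Psi\rvert$, which after squaring and integrating is controlled — up to a factor $\alpha^{-1}$ — by the $\alpha^3\int w^{-1-2\alpha}\lvert\Psi\rvert^2$ term on the left. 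Applying~\eqref{eq:form-bnd} to $\Phi$ (in the $x$-slices, integrating over $t$) and translating back gives
\[
	\rho^4\int_{\RR^{d+1}} w^{2-2\alpha}\lvert V\Psi\rvert^2
	\lesssim
	\rho^4\lambda_1\int w^{2-2\alpha}\lvert\grad_{d+1}\Psi\rvert^2 + \rho^2\lambda_1\int w^{-2\alpha}\lvert\Psi\rvert^2 + \rho^4\lambda_2\int w^{2-2\alpha}\lvert\Psi\rvert^2 .
\]
On the support, $w\leq\lvert y\rvert/\rho\leq 1$, so $w^{2-2\alpha}\leq w^{-1}\cdot w^{1-2\alpha}$ and $w^{2-2\alpha}\leq w^{-1-2\alpha}$; hence the right-hand side is bounded by $\rho^2\lambda_1\int w^{1-2\alpha}\lvert\grad_{d+1}\Psi\rvert^2 \cdot (w^{-1})$-type terms, and more carefully one extracts a factor of the form $(\rho^2\lambda_1/\alpha + \rho^4\lambda_2/\alpha^3)$ times the left-hand side of the Carleman inequality. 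Choosing $\alpha$ large enough — specifically $\alpha\geq\alpha_0$ with $\alpha_0$ comparable to $\tilde\alpha_0 + (\rho^2\lambda_1)^{1/2} + (\rho^4\lambda_2)^{1/3} + \dots$, which is what~\eqref{eq:Carleman-NRT-const} should record — makes this prefactor at most $1/2$, and the potential term gets absorbed, yielding~\eqref{eq:Carleman-est-NRT-Potential} with $C_0$ a fixed multiple of $\tilde C_0$.

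The main obstacle I anticipate is bookkeeping the weight powers so that every term produced by applying the relative bound to $\Phi=w^{1-\alpha}\Psi$ genuinely lands back on the left-hand side of the Carleman estimate with a gain in $\alpha$; in particular one must be careful that the $\grad w$ cross term is dominated by the $\Psi$-term (not the $\grad\Psi$-term) and that the inequality $w\leq 1$ on the support is used to convert the $w^{2-2\alpha}$ weights into $w^{1-2\alpha}$ and $w^{-1-2\alpha}$. A secondary technical point is that~\eqref{eq:form-bnd} is stated for $H^1$ functions on $\RR^d$, so one should apply it slicewise in $t$ and use Fubini, checking that $w^{1-\alpha}\Psi(\cdot,t)\in H^1(\RR^d)$ for a.e.\ $t$ with the right integrability — this is where the support condition away from the origin and the $H^2$ regularity of $\Psi$ enter. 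Finally, the precise tracking of how $\alpha_0$ depends on $\lambda_1,\lambda_2,\rho$ (and hence, via Lemma~\ref{lem:operator-form-bnd}, on $a,b$) has to be carried out explicitly to match the claimed dependence in the statement of Theorem~\ref{thm:ucp}.
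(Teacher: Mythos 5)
Your proposal follows essentially the same route as the paper's proof: start from the $V\equiv 0$ Carleman estimate of~\cite{KleinT-16a}, apply the relative bound~\eqref{eq:form-bnd} slicewise in $t$ to the weighted function $w^{1-\alpha}\Psi$, use the product rule together with~\eqref{eq:Carleman-NRT-weight-est} and $w\leq 1$ on the support to land every resulting term back on the left-hand side with a gain in $\alpha$, and then absorb for $\alpha\geq\alpha_0$. Two small corrections: the triangle inequality must be used in the direction $\lvert\Delta_{d+1}\Psi\rvert^2\leq 2\lvert(-\Delta_{d+1}+V)\Psi\rvert^2+2\lvert V\Psi\rvert^2$ (not the reverse, as written), and solving your own prefactor condition $\rho^2\lambda_1/\alpha+\rho^4\lambda_2/\alpha^3\leq 1/2$ gives $\alpha_0$ of order $\tilde\alpha_0+\tilde C_0\lambda_1\rho^2+(\tilde C_0\lambda_2\rho^4)^{1/3}$, i.e.\ linear in $\lambda_1\rho^2$ rather than $(\rho^2\lambda_1)^{1/2}$, which is exactly what~\eqref{eq:Carleman-NRT-const} records.
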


\begin{proof}
	The case $V\equiv 0$ in the theorem agrees with~\cite[Lemma~2.1]{KleinT-16a}.
	Let us denote the constants in this case by $\tilde{\alpha}_0,\tilde{C}_0\geq 1$. 
	It remains to show that we can insert $V$ on the right-hand side of~\eqref{eq:Carleman-est-NRT-Potential}. 
	To this end, we estimate $\lvert \Delta_{d+1}\Psi \rvert^2 \leq 2 \lvert(-\Delta_{d+1}+V)\Psi \rvert^2 + 2 \lvert V\Psi \rvert^2$ and subsume the resulting term $2\tilde{C}_0\rho^4I$ with 
	\[
		I = \int_{\RR^{d+1}}w^{2-2\alpha} \lvert V\Psi \rvert^2 = \int_\RR \norm{(Vw^{1-\alpha}\Psi)(\cdot,t)}^2_{L^2(\RR^d)}\Diff{t}
	\]
	in the left-hand side of~\eqref{eq:Carleman-est-NRT-Potential}
	by appropriate choices of $C_0$ and $\alpha_0$ that do not depend on $\Psi$. 
	More precisely, since $w$ is smooth on the support of $\Psi$, we have $w^{1-\alpha}(\cdot,t)\Psi(\cdot,t)\in H^1(\RR^d)$ for all $t\in\RR$.
	Thus
	\[
		I \leq \int_\RR \lambda_1\norm{\grad(w^{1-1\alpha}(\cdot,t)\Psi(\cdot,t))}_{L^2(\RR^d)}^2
		+\lambda_2\norm{w^{1-1\alpha}(\cdot,t)\Psi(\cdot,t)}_{L^2(\RR^d)}^2\Diff{t}.
	\]
	The product rule
	and~\eqref{eq:Carleman-NRT-weight-est} imply that the inequality
	\eq{
		\begin{split}
		\lvert \grad_{d+1}(w^{1-\alpha}\Psi) \rvert^2 
		&\leq 2 w^{2-2\alpha} \lvert \grad_{d+1}\Psi \rvert^2 + 2(\alpha-1)^2 \lvert \Psi \rvert^2 w^{-2\alpha}/\rho^2\\
		&\leq 2 w^{1-2\alpha} \lvert \grad_{d+1}\Psi \rvert^2 + 2(\alpha/\rho)^2w^{-1-2\alpha}\lvert \Psi \rvert^2
		\end{split} 
		\label{eq:Carleman-NRT-Gradient}
	}
	holds almost everywhere, where we have taken into account that both sides vanish outside of $B_\rho(0) \setminus \{0\}$. 
	Plugging this into the estimate for $I$,
	we see that we have proven~\eqref{eq:Carleman-est-NRT-Potential} with $C_0=4\tilde{C}_0$, provided that
	\bes
		\alpha\rho^2-4\lambda_1\tilde{C}_0\rho^4 \geq \alpha\rho^2/2\quad\text{and}\quad 
		\alpha^3-2\tilde{C}_0\rho^4(2\lambda_1(\alpha/\rho)^2+\lambda_2)\geq \alpha^3/2.
	\ees
	The latter is clearly satisfied for all $\alpha\geq \alpha_0$ with
	\be
		\alpha_0=\max\{\tilde{\alpha}_0,8\tilde{C}_0\lambda_1\rho^2+(4\tilde{C}_0\lambda_2\rho^4)^{1/3}\},
		\label{eq:Carleman-NRT-const}
	\ee
	which proves the claim.
\end{proof} 

\subsection{Generalization of the second Carleman estimate}\label{ssec:Carleman2}

We define the weight function
\be
	u\colon\RR^{d} \times \RR \ni (x,t)\mapsto -t+t^2/2- \lvert x \rvert^2/4\in\RR.
	\label{eq:Carleman-LR-weight}
\ee
For $\rho > 0$, let $B_\rho^+ =\{x\in\RR^{d+1}\colon \lvert x \rvert <\rho,\ x_{d+1}\geq 0\}$. 
Moreover, we denote by $C^\infty_{c,0}(B_\rho^+)$ 
the set of all functions $F\colon\RR^{d+1}_+\to\CC$, $\RR^{d+1}_+ =\RR^d\times [0,\infty)$, that satisfy $F(x,0)=0$ for all $x\in\RR^d$ and for which 
there exists a smooth function $\tilde{F}$ on $\RR^{d+1}$ with $\supp\tilde{F}\subset B_\rho$ satisfying $F=\tilde{F}$ on $\RR^{d+1}_+$.

\begin{theorem} \label{thm:Carleman-LR}
	Let $V\colon\RR^d \to\RR$ be admissible, let $\rho\in (0,2-\sqrt{2})$, and let $u$ be the weight function given in~\eqref{eq:Carleman-LR-weight}.
	Then there are constants $\beta_0,C_1\geq 1$ such that for all $\beta\geq\beta_0$ and all $\Psi\in C^\infty_{c,0}(B_\rho^+)$ we have
	\[
		\begin{split}
			\int_{\RR_+^{d+1}}&\e^{2\beta u}\lr \beta \lvert\grad_{d+1}\Psi \rvert^2+\beta^3\lvert \Psi \rvert^2\rr \\
			\leq
			C_1&\Bigg( \int_{\RR_+^{d+1}}\e^{2\beta u} \lvert (-\Delta_{d+1}+V)\Psi \rvert^2+\beta\int_{\RR^d}\e^{2\beta u(\cdot,0)} \lvert (\partial_t\Psi)(\cdot,0) \rvert^2\Bigg)
		\end{split}
	\]
	The dependency of $\beta_0$ on $V$ and $\rho$ is given in~\eqref{eq:Carleman-LR-const} below.
\end{theorem}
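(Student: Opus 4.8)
I will prove Theorem~\ref{thm:Carleman-LR} along the same lines as Theorem~\ref{thm:Carleman-NRT}: take the potential-free Carleman estimate as known and show that an admissible $V$ can be inserted on the right-hand side at the cost of enlarging $\beta_0$. The case $V \equiv 0$, i.e.\ the operator $-\Delta_{d+1}$, is the special case of the Carleman estimate of~\cite{LebeauR-95} associated with this operator and the weight $u$; denote the corresponding constants by $\tilde{\beta}_0, \tilde{C}_1 \geq 1$. For a general admissible $V$, regarded as a function on $\RR^{d+1}$ depending only on the distinguished variable $t = x_{d+1}$, write $-\Delta_{d+1}\Psi = (-\Delta_{d+1}+V)\Psi - V\Psi$ and use $\abs{\Delta_{d+1}\Psi}^2 \leq 2\abs{(-\Delta_{d+1}+V)\Psi}^2 + 2\abs{V\Psi}^2$. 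Plugging this into the potential-free estimate, it only remains to subsume the term $2\tilde{C}_1 \int_{\RR_+^{d+1}} \e^{2\beta u}\abs{V\Psi}^2$ into the left-hand side of the asserted inequality, up to a factor $1/2$, by choices of $\beta_0$ and $C_1 = 4\tilde{C}_1$ that do not depend on $\Psi$.

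To estimate this term I slice in $x \in \RR^d$: for fixed $x$ the function $g_x \colon t \mapsto \e^{\beta u(x,t)}\Psi(x,t)$ is smooth with compact support in $[0,\infty)$ (since $\Psi \in \cC^\infty_{c,0}(B_\rho^+)$), hence lies in $H^1((0,\infty))$; even reflection about $t = 0$ produces $\widetilde{g_x} \in H^1(\RR)$ with $\norm{\widetilde{g_x}}_{L^2(\RR)}^2 = 2\norm{g_x}_{L^2((0,\infty))}^2$ and $\norm{\widetilde{g_x}'}_{L^2(\RR)}^2 = 2\norm{g_x'}_{L^2((0,\infty))}^2$. Applying Lemma~\ref{lem:operator-form-bnd}\,(b) for the admissible $V$ on $\RR$ to $\widetilde{g_x}$ and using $\norm{V g_x}_{L^2((0,\infty))}^2 \le \norm{V\widetilde{g_x}}_{L^2(\RR)}^2$ gives, slice-wise,
\[
	\int_0^\infty \abs{V g_x}^2 \leq 2\lambda_1\int_0^\infty \abs{g_x'}^2 + 2\lambda_2\int_0^\infty \abs{g_x}^2 ,\qquad g_x' = \partial_t\bigl(\e^{\beta u}\Psi\bigr)(x,\cdot).
\]
Now the product rule together with $\abs{\partial_t u} = \abs{t-1}\le 1+\rho$ on $B_\rho$ (here $\rho < 2-\sqrt2 < 1$ is comfortably enough) yields $\abs{g_x'}^2 \le 2\e^{2\beta u}\abs{\grad_{d+1}\Psi}^2 + 2(1+\rho)^2\beta^2\e^{2\beta u}\abs{\Psi}^2$ almost everywhere, both sides vanishing outside $B_\rho^+$, in complete analogy with~\eqref{eq:Carleman-NRT-Gradient}. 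Integrating the slice-wise bound over $x$ and inserting this then gives
\[
	\int_{\RR_+^{d+1}} \e^{2\beta u}\abs{V\Psi}^2 \leq C_\rho'\lambda_1 \int_{\RR_+^{d+1}} \e^{2\beta u}\abs{\grad_{d+1}\Psi}^2 + C_\rho'(\lambda_1\beta^2 + \lambda_2) \int_{\RR_+^{d+1}} \e^{2\beta u}\abs{\Psi}^2
\]
for all $\beta \geq 1$, with $C_\rho'$ depending only on $\rho$.

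Comparing the right-hand side of the last inequality, multiplied by $2\tilde{C}_1$, with the left-hand side $\int_{\RR_+^{d+1}} \e^{2\beta u}(\beta\abs{\grad_{d+1}\Psi}^2 + \beta^3\abs{\Psi}^2)$ of the asserted estimate then reduces, exactly as in the proof of Theorem~\ref{thm:Carleman-NRT}, to requiring $2\tilde{C}_1 C_\rho'\lambda_1 \leq \beta/2$ and $2\tilde{C}_1 C_\rho'(\lambda_1\beta^2 + \lambda_2) \leq \beta^3/2$; both hold for all $\beta$ above an explicit threshold $\beta_0$ of the same shape as~\eqref{eq:Carleman-NRT-const}, namely $\beta_0 = \max\{\tilde{\beta}_0,\, c_1\tilde{C}_1\lambda_1,\, (c_2\tilde{C}_1\lambda_2)^{1/3}\}$ with $c_1, c_2$ depending only on $\rho$. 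This proves the estimate with $C_1 = 4\tilde{C}_1$. The only genuinely delicate point — and hence the main, though fairly mild, obstacle — is the slice-wise use of the form bound: one has to check that the slices $g_x$ really lie in $H^1$ of a one-dimensional generalized rectangle on which $V$ is admissible (this is exactly where the defining properties of $\cC^\infty_{c,0}(B_\rho^+)$ — smoothness, compact support in $B_\rho$, and vanishing at $t = 0$ — enter, and where one has to be careful about the half-line $t \ge 0$), and that the resulting pointwise-in-$x$ inequality may be integrated back in $x$ by Tonelli's theorem; everything else is the same bookkeeping as in the proof of Theorem~\ref{thm:Carleman-NRT}, with the quadratic weight $u$ playing the role that the smoothness of $w$ plays there.
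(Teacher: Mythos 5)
Your overall strategy coincides with the paper's: start from the $V\equiv 0$ case of the Lebeau--Robbiano estimate with constants $\tilde\beta_0,\tilde C_1$, write $\abs{\Delta_{d+1}\Psi}^2\le 2\abs{(-\Delta_{d+1}+V)\Psi}^2+2\abs{V\Psi}^2$, and absorb the weighted $\abs{V\Psi}^2$ term into the left-hand side for $\beta\ge\beta_0$ by combining the form bound of Lemma~\ref{lem:operator-form-bnd}\,(b) with the product rule for the weight, ending with $C_1=4\tilde C_1$; this is exactly how the paper argues, in analogy with Theorem~\ref{thm:Carleman-NRT}. The one genuine divergence is the direction in which the form bound is applied, and it stems from your literal reading of ``$V\colon\RR\to\RR$'': you let $V$ act through the ghost variable $t=x_{d+1}$, slice in $x$, apply the one-dimensional form bound to each slice after an even reflection across $t=0$ (which is correct: even extension preserves $H^1$ and doubles the norms), and use $\abs{\partial_t u}\le 1+\rho$. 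In the paper, however, ``$\RR$'' is a typo for ``$\RR^d$'': the convention $V(x,t)=V(x)$ is fixed at the beginning of Section~\ref{sec:Carleman}, and this is the version actually needed in the proof of Theorem~\ref{thm:ucp}, where $V$ is the Schr\"odinger potential. Accordingly, the paper's proof slices in $t$ and applies the $d$-dimensional form bound to $x\mapsto\e^{\beta u(x,t)}\Psi(x,t)\in H^1(\RR^d)$, which needs no reflection since every $t$-slice of $\RR^{d+1}_+$ is a full copy of $\RR^d$, and uses $\abs{\grad_x u}=\abs{x}/2\le\rho/2$; that is precisely the content of the displayed substitute for~\eqref{eq:Carleman-NRT-Gradient} in the paper's proof, leading to the threshold~\eqref{eq:Carleman-LR-const}. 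So your argument is sound for what it proves, but it proves the variant with a potential depending only on the ghost variable, which is not the statement used later; transposing your computation (slice in $t$, form bound in $x$, drop the reflection) recovers the intended theorem with the same bookkeeping, and your $\beta_0$ then differs from~\eqref{eq:Carleman-LR-const} only in inessential constants.
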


In the particular case where $V\equiv 0$, Theorem~\ref{thm:Carleman-LR} follows from the Carleman estimate given in Proposition~1 in the appendix of~\cite{LebeauR-95}. 
This Carleman estimate is formulated for arbitrary real-valued weight functions $u \in C^\infty (\RR^{d+1})$ satisfying 
\begin{enumerate}[(i)]
	\item $(\partial_{d+1} u ) (x) \not = 0$ for all $x\in B_\rho^+$, and
	\item for all $\xi \in \RR^{d+1}$ and $x \in B_\rho^+$ the implication 
	\bes 
		\left.
		\begin{array}{l}
		 2 \langle \xi , \nabla \psi \rangle = 0 \\[1ex]
		 \lvert \xi \rvert^2 = \lvert \nabla u \rvert^2
		\end{array}
		\right\}
		\quad \Rightarrow \quad
		\sum_{j,k=1}^{d+1} (\partial_{jk} u) \bigl(\xi_j \xi_k + (\partial_j u) (\partial_k u) \bigr) > 0 .
	\ees
	holds.
\end{enumerate} 
The particular weight function~\eqref{eq:Carleman-LR-weight} has been suggested in~\cite{JerisonL-99}. 
With this choice, (i) and (ii) are satisfied if $\rho \in (0,2-\sqrt{2})$.

\begin{proof}[Proof of Theorem~\ref{thm:Carleman-LR}]
	We have already noted that the theorem holds in the case that $V\equiv 0$. 
	Let $\tilde{\beta}_0,\tilde{C}_1\geq 1$ be the corresponding constants for this case. 
	The proof of the theorem is now analogous to the one of Theorem~\ref{thm:Carleman-NRT}. 
	We only need to replace~\eqref{eq:Carleman-NRT-Gradient} by
	\eqs{
		\lvert \grad(\e^{\beta u}\Psi) \rvert^2 
		&\leq 2\e^{2\beta u}\lr \lvert \grad\Psi \rvert^2
		+\beta^2\rho^2 \lvert \Psi \rvert^2/4\rr\quad\text{on}\quad \supp\Psi \subset B_\rho(0),
	}
	and choose  
	\be
		\beta_0 =\max\{\tilde{\beta}_0,2\tilde{C}_1\lambda_1\rho^2+(4\tilde{C}_1\lambda_2)^{1/3}\}.
		\label{eq:Carleman-LR-const}
	\ee
	This proves the theorem with $C_1=4\tilde{C}_1$.
\end{proof} 
%
%
\section{Unique continuation with singular potentials}\label{sec:UCP}
In this section we revisit the proof in~\cite{NakicTTV-18-JST}, which is in turn based on \cite{JerisonL-99}, and apply the
Carleman estimates obtained in the previous section to consider the class of admissible potentials $V$ in the context of unique continuation.
Note that in Section~\ref{sec:Carleman} we have assumed that the potential under consideration is admissible on the whole of $\RR^d$. 
In light of Lemma~\ref{lem:extension} and the preceding discussion, this is no restriction since we can always extend the potential $V$ on $\L$ in a suitable way.
Recall that the operator $H^\bullet_\L$ is self-adjoint and lower semibounded by Lemma~\ref{lem:operator-form-bnd}\,(a).

As in~\cite{JerisonL-99,NakicTTV-18-JST} we need to introduce the concept of ghost dimension in order to deal with spectral projections.
It allows to treat elements of a spectral subspace as eigenfunctions of a similar operator in higher dimensions. 

Let us abbreviate $H =H^\bullet_\L$, and denote by $(\mathcal{F}_t)_{t\in\RR}$ the family of unbounded self-adjoint operators
\[
	\mathcal{F}_t =\int_{-\infty}^\infty s_t(\lambda)\Diff{P_H(\lambda)},\quad
	s_t(\lambda) =\begin{cases} 
		\frac{\sinh(\sqrt{\lambda}t)}{\sqrt{\lambda}} & \lambda>0\\
		t & \lambda = 0 \\
		\frac{\sin(\sqrt{-\lambda}t)}{\sqrt{-\lambda}} & \lambda<0
	\end{cases},	
\]
in $L^2(\L)$, where $P_H(\lambda)$ is the spectral projection of $H$ associated with the interval $(-\infty,\lambda]$.
For fixed $\psi\in\Ran P_H(E)$ we define the function $\Psi \colon \L \times \RR \to \CC$ by
\be
	\Psi(\cdot,t) =\mathcal{F}_t\psi \in \Ran P_H(E) \subset \cD(H).
	\label{eq:extendedFunction}
\ee
This function belongs to $H^2(\Lambda\times (-T,T))$ for every $T>0$.
Moreover, it follows from~\cite[Lemma~2.5]{NakicTTV-18-JST} that $(\partial_t \Psi)(\cdot,0) = \psi$ and
\be
	H(\Psi(\cdot,t)) = (\partial_t^2\Psi)(\cdot,t).
	\label{eq:extendedOperator}
\ee

As in~\cite{NakicTTV-18-JST}, we can extend the operator $H$ to a sufficiently large generalized rectangle $\tilde{\L}$, where the
potential $V$ is extended as described in Section~\ref{sec:potentials}.
The accordingly extended functions $\psi$ and $\Psi$ inherit the properties mentioned above, in particular,~\eqref{eq:extendedOperator}. 
This enables us to apply the Carleman estimates from Section~\ref{sec:Carleman}.

For the sake of completeness, we recall in Appendix \ref{sec:ghost-dimension} certain aspects of the ghost dimension construction, which were left out in the last mentioned article.

The following lemma provides an appropriate replacement of~\cite[Proposition~2.9]{NakicTTV-18-JST} that allows us to come back from $\Psi$ to the original function $\psi$. 

\begin{lemma} \label{lem:norm-compare}
	Let $\tau>0$ and $\psi\in\Ran P_{H}(E)$, $E\geq 0$.
	Then
	\bes
		\frac{\tau}{2}\norm{\psi}_{L^2(\L)}^2 
		\leq \norm{\Psi}_{H^1(\L\times (-\tau,\tau))}^2
		\leq 2\tau(1+(1+\omega)\tau^2)e^{2\tau\sqrt{E}}\norm{\psi}_{L^2(\L)}^2,
	\ees
	where $\omega=(1+2\lambda_1E+\lambda_1^2+2\lambda_2)/2$.
\end{lemma}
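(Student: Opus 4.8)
The plan is to estimate the $H^1(\L\times(-\tau,\tau))$-norm of $\Psi$ directly in terms of $\psi$ using the spectral calculus, since $\Psi(\cdot,t)=\mathcal{F}_t\psi$ is defined via a functional calculus of $H$ applied to $\psi\in\ran P_H(E)$. Writing $\psi=\int\Diff{P_H(\lambda)}\psi$ with the spectral measure supported in $(-\infty,E]$, one has $\norm{\Psi(\cdot,t)}_{L^2(\L)}^2=\int s_t(\lambda)^2\Diff{\langle P_H(\lambda)\psi,\psi\rangle}$, $\norm{(\partial_t\Psi)(\cdot,t)}_{L^2(\L)}^2=\int (\partial_t s_t(\lambda))^2\Diff{\langle P_H(\lambda)\psi,\psi\rangle}$, and — using \eqref{eq:extendedOperator} together with the form bound \eqref{eq:form-bounded} to control $\grad$ — a bound for $\norm{\grad\Psi(\cdot,t)}_{L^2(\L)}^2$ in terms of $\langle H\Psi(\cdot,t),\Psi(\cdot,t)\rangle$ and $\norm{\Psi(\cdot,t)}_{L^2(\L)}^2$; the latter inner product is $\int \lambda\, s_t(\lambda)^2\Diff{\langle P_H(\lambda)\psi,\psi\rangle}$. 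Then I would integrate over $t\in(-\tau,\tau)$ and pull the $t$-integral inside the spectral integral by Fubini/Tonelli.

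The heart of the matter is therefore pointwise-in-$\lambda$ control of the scalar functions $s_t(\lambda)$ and $\partial_t s_t(\lambda)$ for $\lambda\le E$ (with $E\ge 0$), and of the integrals $\int_{-\tau}^\tau s_t(\lambda)^2\Diff{t}$ etc. For the \emph{lower} bound, the key observation is that $\partial_t s_t(\lambda)|_{t=0}=1$ for every $\lambda$, so $\norm{(\partial_t\Psi)(\cdot,0)}_{L^2(\L)}=\norm{\psi}_{L^2(\L)}$; by continuity $\norm{(\partial_t\Psi)(\cdot,t)}_{L^2(\L)}^2\ge \tfrac12\norm{\psi}_{L^2(\L)}^2$ on a neighbourhood of $0$ — more carefully, one checks $(\partial_t s_t(\lambda))^2=\cosh^2(\sqrt\lambda t)\ge 1$ for $\lambda>0$ and $=\cos^2(\sqrt{-\lambda}\,t)$ for $\lambda<0$, and a short elementary estimate shows $\cos^2(\sqrt{-\lambda}\,t)\,$ integrated against the spectral measure, together with the $\grad$-term, still yields $\int_{-\tau}^\tau\norm{(\partial_t\Psi)(\cdot,t)}^2\Diff t\ge\tfrac{\tau}{2}\norm{\psi}^2$; since $\norm{\Psi}_{H^1}^2\ge\int_{-\tau}^\tau\norm{\partial_t\Psi(\cdot,t)}^2\Diff t$, the lower bound follows. (I expect the honest route is to observe this is exactly \cite[Lemma~2.5 / Lemma~2.7]{NakicTTV-18-JST} in the $V\equiv0$ bookkeeping, adjusted only in that the gradient term is now estimated via \eqref{eq:form-bounded} rather than a bounded-potential bound.)

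For the \emph{upper} bound, I would use the elementary inequalities $|s_t(\lambda)|\le |t|\,e^{|t|\sqrt{\max\{0,\lambda\}}}\le\tau e^{\tau\sqrt E}$ and $|\partial_t s_t(\lambda)|\le e^{|t|\sqrt{\max\{0,\lambda\}}}\le e^{\tau\sqrt E}$ for $|t|\le\tau$, $\lambda\le E$, which handle the $L^2$- and $\partial_t$-parts and produce the factors $\tau^2 e^{2\tau\sqrt E}$ and $e^{2\tau\sqrt E}$ after integrating over $t$. The gradient part is the only place the potential enters: from \eqref{eq:extendedOperator} and the form bound one gets $\norm{\grad\Psi(\cdot,t)}^2 = \langle H\Psi(\cdot,t),\Psi(\cdot,t)\rangle - \langle V\Psi(\cdot,t),\Psi(\cdot,t)\rangle \le \langle\partial_t^2\Psi(\cdot,t),\Psi(\cdot,t)\rangle + \lambda_1^{1/2}\norm{\grad\Psi(\cdot,t)}\norm{\Psi(\cdot,t)}+\lambda_2^{1/2}\norm{\Psi(\cdot,t)}^2$; absorbing the $\norm{\grad\Psi}$ on the right by Young's inequality and bounding $\langle\partial_t^2\Psi,\Psi\rangle = \int\lambda\,s_t(\lambda)^2\Diff{\langle P_H\psi,\psi\rangle}\le E\,\norm{\Psi(\cdot,t)}^2$ (again $\lambda\le E$, $E\ge0$) yields, after tracking constants, precisely the factor $(1+(1+\omega)\tau^2)$ with $\omega=(1+2\lambda_1 E+\lambda_1^2+2\lambda_2)/2$.

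The main obstacle is bookkeeping the constants so that the gradient estimate collapses exactly to the stated $\omega$: one must be careful that the Young's-inequality split used to absorb $\lambda_1^{1/2}\norm{\grad\Psi}\norm{\Psi}$ produces the $\lambda_1^2$ and the $2\lambda_2$ in $\omega$ and no worse, and that the factor $2\tau$ in front (rather than $\tau$) leaves exactly the claimed room. Everything else is the spectral-calculus reduction plus elementary bounds on $\sinh,\cosh,\sin,\cos$; no new analytic input beyond \eqref{eq:extendedOperator}, \eqref{eq:form-bounded}, and the properties of $\Psi$ recalled from \cite{NakicTTV-18-JST} is needed.
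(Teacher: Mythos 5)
Your overall strategy coincides with the paper's: reduce everything to the scalar functions $s_t(\lambda)$ via the spectral theorem, use \eqref{eq:extendedOperator} together with the form bound \eqref{eq:form-bounded} to control the gradient term, and defer the lower bound to \cite{NakicTTV-18-JST}. There is, however, a genuine gap in your upper bound, located exactly where you flag the ``heart of the matter''. You propose to estimate $\langle\partial_t^2\Psi(\cdot,t),\Psi(\cdot,t)\rangle_{L^2(\L)}=\int\lambda\,s_t(\lambda)^2\Diff{\langle P_H(\lambda)\psi,\psi\rangle}$ crudely by $E\,\norm{\Psi(\cdot,t)}_{L^2(\L)}^2$ and only then integrate in $t$. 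After integration this term contributes $E\int_0^\tau s_t(\lambda)^2\Diff{t}\lesssim E\,\tau^3\e^{2\tau\sqrt{E}}$, i.e.\ a standalone $E\tau^2$ inside the bracket. The stated constant $\bigl(1+(1+\omega)\tau^2\bigr)$ cannot absorb this, because $\omega=(1+2\lambda_1E+\lambda_1^2+2\lambda_2)/2$ contains $E$ only multiplied by $\lambda_1$; already for $V\equiv0$ (so $\lambda_1=\lambda_2=0$, $\omega=1/2$) and $E$ large your route yields $2\tau\bigl(1+(1+E)\tau^2/3\bigr)\e^{2\tau\sqrt{E}}$, which is strictly weaker than the claimed $2\tau\bigl(1+\tfrac32\tau^2\bigr)\e^{2\tau\sqrt{E}}$. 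The paper avoids this loss by \emph{not} estimating $\int_\L\overline{\Psi}_t\,\partial_t^2\Psi_t$ pointwise in $\lambda$: it keeps the combination $(\partial_ts_t(\lambda))^2+(\partial_t^2s_t(\lambda))s_t(\lambda)=\partial_t\bigl(s_t(\lambda)\,\partial_ts_t(\lambda)\bigr)$ as a total derivative, so that its integral over $(0,\tau)$ collapses to the boundary term $s_\tau(\lambda)\,\partial_\tau s_\tau(\lambda)\le\tau\e^{2\tau\sqrt{E}}$, which is linear in $\tau$ and carries no extra factor of $E$. This cancellation between the $\lvert\partial_t\Psi\rvert^2$ term and the $\overline{\Psi}\,\partial_t^2\Psi$ term is the key step missing from your proposal; without it the claimed inequality does not follow.

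A secondary, repairable discrepancy: your absorption of $\lvert\langle V\Psi_t,\Psi_t\rangle\rvert$ via $\lambda_1^{1/2}\norm{\grad\Psi_t}\norm{\Psi_t}+\lambda_2^{1/2}\norm{\Psi_t}^2$ and Young's inequality produces constants like $\lambda_1+2\sqrt{\lambda_2}$ rather than the stated $2\lambda_1E+\lambda_1^2+2\lambda_2$. The paper instead applies \eqref{eq:form-bounded} to $\norm{V\Psi_t}_{L^2(\L)}^2$, writes $\langle\Psi_t,-\Delta\Psi_t\rangle\le\langle\Psi_t,H\Psi_t\rangle+\langle\Psi_t,\lvert V\rvert\Psi_t\rangle$, and uses Young's inequality twice (with $\mu=\lambda_1$ and then $\mu=1$) to arrive at $\int_\L\lvert V\rvert\,\lvert\Psi_t\rvert^2\le\omega\norm{\Psi_t}_{L^2(\L)}^2$ exactly. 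That part of your argument is a matter of bookkeeping; the missing total-derivative cancellation is structural.
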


\begin{proof}
	In the following we write $\Psi_t =\Psi(\cdot,t)\in\Ran P_H(E)$ for fixed $t$.
	
	For the proof of the lower bound we follow verbatim the proof of Proposition~2.9 in \cite{NakicTTV-18-JST}.
	We only need to adapt the reasoning for the upper bound to the present case of admissible $V$. 
	In view of~\eqref{eq:extendedOperator}, we obtain following \cite{NakicTTV-18-JST} that
	\[
		\int_\L \lvert \grad\Psi_t \rvert^2
		=-\int_\L V\lvert \Psi_t \rvert^2 + \int_\L \overline{\Psi}_t\partial_t^2\Psi_t
	\]
	and, therefore,
	\eqs{
		\norm{\Psi}_{H^1(\L\times (-\tau,\tau))}^2 
		&\leq \int_{-\tau}^\tau \int_\L(1+ \lvert V \rvert)\lvert\Psi_t \rvert^2+\lvert\overline{\Psi}_t \rvert \lvert (\partial_t^2\Psi_t) \rvert + \lvert \partial_t\Psi_t \rvert^2.
	}
	Using Young's inequality, we obtain for all $\mu > 0$ that
	\be
		\langle \Psi_t, \lvert V \rvert \Psi_t\rangle_{L^2(\L)}
		\leq
		\frac{1}{2\mu}\norm{V\Psi_t}_{L^2(\L)}^2 + \frac{\mu}{2}\norm{\Psi_t}_{L^2(\L)}^2
		\label{eq:comparison-of-norms-Young} .
	\ee
	Moreover, by Lemma~\ref{lem:operator-form-bnd}\,(b) we have 
	\eqs{
		\norm{V\Psi_t}_{L^2(\L)}^2
		&\leq
		\lambda_1\langle \Psi_t,-\Delta\Psi_t\rangle_{L^2(\L)} +\lambda_2\norm{\Psi_t}_{L^2(\L)}^2\\
		&\leq
		\lambda_1\left[\langle \Psi_t,H\Psi_t\rangle_{L^2(\L)} +\langle \Psi_t,\lvert V \rvert\Psi_t\rangle_{L^2(\L)}\right]
			+ \lambda_2\norm{\Psi_t}_{L^2(\L)}^2.
	}
	From this inequality and \eqref{eq:comparison-of-norms-Young} with $\mu = \lambda_1$ we find
	\be
		\begin{aligned}
			\norm{V\Psi_t}_{L^2(\L)}^2
			&\leq
			2\lambda_1\langle \Psi_t,H\Psi_t\rangle_{L^2(\L)}+2\left(\frac{\lambda_1^2}{2}+\lambda_2\right)\norm{\Psi_t}_{L^2(\L)}^2\\
			&\leq
			\left(2\lambda_1E+\lambda_1^2+2\lambda_2\right)\norm{\Psi_t}_{L^2(\L)}^2
			,
		\end{aligned}
		\label{eq:smoothing}
	\ee
	where for the last step we have taken into account that $\Psi_t\in\Ran P_H(E)$ and, thus, $\langle \Psi_t, H\Psi_t\rangle_{L^2(\L)} \leq E\norm{\Psi_t}_{L^2(\L)}^2$. 
	In turn, substituting~\eqref{eq:smoothing} into~\eqref{eq:comparison-of-norms-Young} with $\mu = 1$ gives
	\[
		\int_\L \lvert V \rvert \lvert \Psi_t \rvert^2
		\leq \frac{1}{2}(1+2\lambda_1E+\lambda_1^2+2\lambda_2)\norm{\Psi_t}_{L^2(\L)}^2
		=\omega \norm{\Psi_t}_{L^2(\L)}^2 .
	\]
	Thus, we obtain
	\[
		\norm{\Psi}_{H^1(\L\times (-\tau,\tau))}^2 
		\leq 2\int_{-\infty}^E I(\lambda)\Diff{\norm{P_{H}(\lambda)\psi}^2_{L^2(\L)}}
	\]
	with
	\eqs{
		I(\lambda)
		&=\int_0^\tau(\partial_ts_t(\lambda))^2+(\partial_t^2s_t(\lambda))s_t(\lambda)+(1+\omega)s_t(\lambda)^2\Diff{t}\\
		&\leq(1+\omega)\int_0^\tau (s_t(\lambda))^2\Diff{t}+(\partial_\tau s_\tau(\lambda))s_\tau(\lambda).
	}
	The rest of the proof is exactly as in~\cite{NakicTTV-18-JST}, but with $\norm{V}_\infty$ replaced by $\omega$.
\end{proof} 

\begin{remark}
	Inequality~\eqref{eq:smoothing} in the above proof formulates that the action of the admissible potential $V$ on elements of the spectral subspace for $H$ is
	comparable to that of a bounded potential. 
\end{remark} 

\begin{proof}[Proof of Theorem~\ref{thm:ucp}]
	It suffices to consider the case $G=1$; the general case then follows from a standard scaling argument, see the proof
	of~\cite[Corollary~2.2]{NakicTTV-18-JST}.
	
	We use $\cK_j$, $j\in\NN$, to denote constants depending only on the dimension $d$. 
	We first observe that with Lemma~\ref{lem:norm-compare} and the Carleman estimates from Section~\ref{sec:Carleman} at hand, we may follow the proof 
	of~\cite[Theorem~2.1]{NakicTTV-18-JST} for bounded potentials $V$; cf.~also the more comprehensive proof of~\cite[Theorem~3.17]{Taeufer-18}.
	Here one only needs to replace $\norm{V}_\infty$ by $\omega$ from the statement of Lemma~\ref{lem:norm-compare}.
	This leads to
	\[
		\norm{\psi}_{L^2(S_{\delta})} \geq \cK_1D_1^{-2} (D_2D_3)^{-2/\gamma}\norm{\psi}_{L^2(\L)},
	\]
	where $D_1,D_2,D_3$, and $\gamma$ satisfy
	\[
		D_1^2\leq\delta^{-\cK_2(1+\beta_0)},\
		D_2^{2/\gamma}\leq\delta^{-\cK_3(1+\alpha_0)}\ \text{and}\ D_3^{2/\gamma}\leq \delta^{-\cK_4(\log(1+\omega)+\sqrt{E})}.
	\]
	It remains to estimate $\alpha_0,\beta_0$ and $\log(1+\omega)$. 
	To this end, we observe 
	\[
		\log(1+\omega)
		\leq
		\cK_5\omega^{1/6}
		\leq
		\cK_6(1+\lambda_1+\lambda_2^{1/3}+\sqrt{E_+})
		,
	\]
	and recalling~\eqref{eq:Carleman-NRT-const},~\eqref{eq:Carleman-LR-const}, and since in the proof of~\cite[Theorem~2.1]{NakicTTV-18-JST} the parameter
	$\rho$ in the first Carleman estimate is bounded by a constant depending only on the dimension $d$, we have
	\[
		\alpha_0
		\leq
		\cK_7(1+\lambda_1+\lambda_2^{1/3})
		\quad\text{and}\quad
		\beta_0
		\leq
		\cK_8(1+\lambda_1+\lambda_2^{1/3}).
	\]
	This finishes the proof of the theorem. 
\end{proof}

\appendix 

\section{Remarks on ghost dimension} \label{sec:ghost-dimension}

We use the notation from Section~\ref{sec:UCP}.
Set $\kappa = \inf\sigma(H)$ and let $\psi \in \Ran P_H(E)$ for some $E \geq \kappa$.
According to~\cite[Lemma~2.5]{NakicTTV-18-JST}, the corresponding function $\Psi$ as defined in~\eqref{eq:extendedFunction} 
is infinitely $L^2(\Lambda)$-differentiable with respect to $t$ with derivatives
\be\label{eq:L2Diff}
	\partial_t^k \Psi(\cdot,t)
	=\biggl( \int_{[\kappa,E]} \partial_t^k s_t(\lambda) \Diff{P_H(\lambda) \biggr)\psi}\in\cD(H),\quad k \in \NN.
\ee
Here, $\partial_t^k \Psi(\cdot,t) \in \cD(H)$ follows from the fact that each $\partial_t^k s_t$ is bounded on $[\kappa,E]$.
Moreover, each $\partial_t^k \Psi$ belongs to $L^2(\Lambda \times (-T,T))$ for every $T > 0$.

\begin{lemma}
	$\Psi$ is infinitely weakly differentiable with respect to $t$, and the corresponding weak derivatives coincide with their $L^2(\Lambda)$ analogues.
\end{lemma}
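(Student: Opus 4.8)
The statement asserts that the function $\Psi(\cdot,t) = \mathcal{F}_t\psi$ is infinitely weakly differentiable in $t$ and that the weak $t$-derivatives coincide with the $L^2(\Lambda)$-derivatives $\partial_t^k\Psi$ from~\eqref{eq:L2Diff}. The natural approach is induction on $k$, reducing everything to the case $k=1$: it suffices to show that for each fixed $k\in\NN_0$, the function $\partial_t^k\Psi$ (interpreted as the $L^2(\Lambda)$-derivative, with $\partial_t^0\Psi=\Psi$) has $\partial_t^{k+1}\Psi$ as its weak $t$-derivative in the sense of distributions on $\Lambda\times(-T,T)$. Since all the relevant functions lie in $L^2(\Lambda\times(-T,T))$ for every $T>0$, it is enough to verify the defining identity against test functions.

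\medskip\noindent First I would fix $T>0$ and a test function $\eta\in\cC^\infty_c(\Lambda\times(-T,T))$, and aim to show
\[
	\int_{\Lambda\times(-T,T)} \Psi(x,t)\,\partial_t\eta(x,t)\,\Diff{(x,t)}
	= -\int_{\Lambda\times(-T,T)} (\partial_t\Psi)(x,t)\,\eta(x,t)\,\Diff{(x,t)},
\]
and analogously with $\Psi$ replaced by $\partial_t^k\Psi$. The key point is that $t\mapsto\Psi(\cdot,t)$ is not merely $L^2(\Lambda)$-differentiable but continuously so — indeed $\partial_t^{k}s_t(\lambda)$ and $\partial_t^{k+1}s_t(\lambda)$ are jointly continuous in $(t,\lambda)$ and uniformly bounded for $t$ in compact sets and $\lambda\in[\kappa,E]$, so by dominated convergence (applied to the spectral integral) the map $t\mapsto\partial_t^k\Psi(\cdot,t)\in L^2(\Lambda)$ is $C^1$ with derivative $\partial_t^{k+1}\Psi(\cdot,t)$. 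Given this, I would slice the integral via Fubini, writing the left-hand side as $\int_\Lambda\int_{-T}^{T}\Psi(x,t)\partial_t\eta(x,t)\Diff{t}\Diff{x}$, and for each fixed $x$ integrate by parts in $t$ on the real line — legitimate because $\eta(x,\cdot)$ is compactly supported in $(-T,T)$ and $t\mapsto\Psi(x,t)$ is (for a.e.\ $x$) absolutely continuous with derivative the pointwise $t$-derivative, a fact that itself follows from the strong $C^1$-differentiability in $L^2(\Lambda)$ together with a standard argument identifying strong and pointwise (a.e.) derivatives. Then Fubini again reassembles the right-hand side.

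\medskip\noindent The main obstacle I anticipate is the passage between the strong ($L^2(\Lambda)$-valued) derivative and the pointwise/weak derivative: one must justify that the $L^2(\Lambda)$-derivative $\partial_t\Psi$ of $t\mapsto\Psi(\cdot,t)$ actually serves as the integrand in an integration-by-parts formula on $\Lambda\times(-T,T)$. The cleanest route is probably to observe that for a $C^1$ curve $t\mapsto u(t)\in L^2(\Lambda)$ one has the fundamental theorem of calculus $u(t_2)-u(t_1)=\int_{t_1}^{t_2}u'(s)\Diff{s}$ as a Bochner integral in $L^2(\Lambda)$, pair this with $\eta(\cdot,t)$, and integrate the resulting scalar identity; an application of Fubini for the Bochner integral then yields the distributional identity directly, bypassing any need to talk about pointwise-in-$x$ absolute continuity. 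With that in hand the induction closes immediately, since $\partial_t^{k+1}\Psi$ is itself of the same form~\eqref{eq:L2Diff} and hence again strongly $C^1$ in $t$.
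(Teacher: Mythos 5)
Your proposal is correct, and its overall architecture matches the paper's: induction over $k$, with the whole matter reduced to showing that the strong $L^2(\Lambda)$-derivative $\partial_t^{k+1}\Psi$ from~\eqref{eq:L2Diff} is also the weak $t$-derivative of $\partial_t^k\Psi$ on $\Lambda\times(-T,T)$. The mechanism for that key step differs, though. The paper stays entirely at the level of difference quotients: it first proves the quantitative bound
\[
	\Bigl\lVert \partial_t^{k+1}\Psi(\cdot,t) - \tfrac{1}{h}\bigl(\partial_t^k\Psi(\cdot,t+h)-\partial_t^k\Psi(\cdot,t)\bigr) \Bigr\rVert_{L^2(\Lambda)}^2 \leq C h \norm{\psi}_{L^2(\Lambda)}^2
\]
via the spectral calculus and the mean value theorem applied to $\partial_t^{k+2}s_t$, tests against $\varphi$ using Cauchy--Schwarz to pass the limit, and then shifts the difference quotient onto $\varphi$ by a change of variables in $t$, finishing with dominated convergence. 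You instead upgrade the $L^2(\Lambda)$-differentiability to $C^1$-regularity of the curve $t\mapsto\partial_t^k\Psi(\cdot,t)$ and invoke the fundamental theorem of calculus for Bochner integrals, pairing with the test function and applying Fubini. Both are legitimate; the paper's route is more self-contained (only elementary estimates and a substitution), while yours leans on standard vector-valued integration theory but arguably makes the ``strong derivative $=$ weak derivative'' principle more transparent. You were also right to flag, and then avoid, the pointwise-in-$x$ absolute continuity argument: that version would require an extra justification (e.g., a.e.\ convergence along a subsequence of difference quotients), whereas the Bochner/pairing formulation sidesteps it, just as the paper's change-of-variables trick does. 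The only point both your write-up and the paper leave implicit is the choice of a jointly measurable representative of $(x,t)\mapsto\Psi(x,t)$, which is needed before one can speak of weak derivatives on $\Lambda\times\RR$ at all; this follows from the continuity of the curve in $L^2(\Lambda)$ and is not a gap specific to your argument.
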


\begin{proof}
	First we show that
	\[
		\lim_{h\to 0} \int_J \Bigl\lVert \partial_t^{k+1} \Psi(\cdot,t) - \frac{\partial_t^k \Psi(\cdot,t+h) -
		\partial_t^k \Psi(\cdot,t)}{h} \Bigr\rVert_{L^2(\Lambda)}^2 \Diff{t}
		=0
	\]
	for each bounded interval $J \subset \RR$ and each $k \in \NN_0$, where $\partial_t^k \Psi(\cdot,t)$ is given
	by~\eqref{eq:L2Diff}. To this end, it suffices to observe that
	\begin{align*}
		\Bigl\lVert\partial_t^{k+1} \Psi(\cdot,t) &- \frac{\partial_t^k \Psi(\cdot,t+h) - \partial_t^k \Psi(\cdot,t)}{h}\Bigr\rVert_{L^2(\Lambda)}^2\\
		&=\int_{[\kappa,E]}\Bigl\lvert\partial_t^{k+1} s_t(\lambda) - \frac{\partial_t^k s_{t+h}(\lambda) - 
		\partial_t^k s_t(\lambda)}{h}\Bigr\rvert^2\Diff{\langle P_H(\lambda)\psi , \psi \rangle}\\
		&\leq C h \norm{\psi}_{L^2(\Lambda)}^2
	\end{align*}
	with
	\[
		C = \sup_{(t,\lambda) \in \tilde{J} \times [\kappa,E]} \abs{ \partial_t^{k+2} s_t(\lambda) }^2 < \infty,
		\quad
		\tilde{J} = \{ t \pm \abs{h} \colon t \in J\},
	\]
	where we have taken into account the mean value theorem of differential calculus.

	Now, let $\varphi \in C_c^\infty(\Lambda \times \RR)$. The above then implies by Fubini's theorem and Cauchy-Schwarz inequality that
	\begin{multline*}
		\int_{\Lambda\times\RR} (\partial_t^{k+1}\Psi)(x,t) \varphi(x,t) \Diff{(x,t)}\\
		=\lim_{h\to 0} \int_{\Lambda\times\RR} \frac{(\partial_t^k \Psi)(x,t+h) - (\partial_t^k \Psi)(x,t)}{h}\varphi(x,t) \Diff{(x,t)}.
	\end{multline*}
	On the other hand, by change of variables with respect to $t$, we obtain
	\begin{align*}
		\int_{\Lambda\times\RR} &\frac{(\partial_t^k \Psi)(x,t+h) - (\partial_t^k \Psi)(x,t)}{h}\varphi(x,t) \Diff{(x,t)}\\
		&=\int_{\Lambda\times\RR} (\partial_t^k \Psi)(x,t) \frac{\varphi(x,t-h) - \varphi(x,t)}{h} \Diff{(x,t)}\\
		&\xrightarrow[h\to\infty]{}
		-\int_{\Lambda\times\RR} (\partial_t^k \Psi)(x,t) (\partial_t \varphi)(x,t) \Diff{(x,t)},
	\end{align*}
	where for the latter we have taken into account Lebesgue's dominated convergence theorem. The claim then follows by induction over $k$.
\end{proof}

\begin{lemma}
	For every $T > 0$ we have $\Psi \in H^2(\Lambda \times (-T,T))$.
\end{lemma}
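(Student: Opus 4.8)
The plan is to combine the preceding lemma --- which already puts $\partial_t\Psi$ and $\partial_t^2\Psi$ into $L^2(\Lambda\times(-T,T))$ --- with the observation that both $\Psi(\cdot,t)$ and $\partial_t\Psi(\cdot,t)$ take values in $\cD(H)=\cD(\Delta_\L^\bullet)\subset H^2(\Lambda)$ with $H^2(\Lambda)$-norm bounded uniformly for $t\in(-T,T)$. From this one reads off that $\Psi$, all its pure spatial derivatives up to order two, and the mixed derivatives $\partial_{x_i}\partial_t\Psi$ lie in $L^2(\Lambda\times(-T,T))$, which together with $\partial_t\Psi,\partial_t^2\Psi\in L^2(\Lambda\times(-T,T))$ is exactly the assertion $\Psi\in H^2(\Lambda\times(-T,T))$.

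First I would record the uniform bounds. Since $\Psi(\cdot,t)\in\Ran P_H(E)$ and $(t,\lambda)\mapsto s_t(\lambda)$ together with all its $t$-derivatives is continuous (in fact entire in $\lambda$), the spectral calculus gives
\[
	\sup_{\abs{t}\le T}\norm{\Psi(\cdot,t)}_{L^2(\Lambda)}+\sup_{\abs{t}\le T}\norm{\partial_t\Psi(\cdot,t)}_{L^2(\Lambda)}+\sup_{\abs{t}\le T}\norm{\partial_t^2\Psi(\cdot,t)}_{L^2(\Lambda)}\le c\,\norm{\psi}_{L^2(\Lambda)},
\]
the constant $c$ being controlled by the suprema of $\abs{s_t(\lambda)},\abs{\partial_ts_t(\lambda)},\abs{\partial_t^2s_t(\lambda)}$ over the compact set $\abs{t}\le T$, $\lambda\in[\kappa,E]$. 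Using that $\cD(\Delta^\bullet_\L)\hookrightarrow H^2(\Lambda)$ continuously on the generalized rectangle $\Lambda$ (cf.\ the discussion after Theorem~\ref{thm:ucp} and \cite[Example~4.2]{Seelmann-20}), it then suffices to bound $\norm{\Delta^\bullet_\L\Psi(\cdot,t)}_{L^2(\Lambda)}$. By \eqref{eq:extendedOperator},
\[
	-\Delta^\bullet_\L\Psi(\cdot,t)=-H\Psi(\cdot,t)+V\Psi(\cdot,t)=-\partial_t^2\Psi(\cdot,t)+V\Psi(\cdot,t),
\]
and $\norm{V\Psi(\cdot,t)}_{L^2(\Lambda)}$ is controlled by $\norm{\Psi(\cdot,t)}_{L^2(\Lambda)}$ via Lemma~\ref{lem:operator-form-bnd}\,(b) together with the smoothing argument from the proof of Lemma~\ref{lem:norm-compare} (estimate $\norm{\grad\Psi(\cdot,t)}_{L^2(\Lambda)}^2=\langle\Psi(\cdot,t),H\Psi(\cdot,t)\rangle-\langle\Psi(\cdot,t),V\Psi(\cdot,t)\rangle$ and absorb). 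Hence $\sup_{\abs{t}\le T}\norm{\Psi(\cdot,t)}_{H^2(\Lambda)}<\infty$; the same reasoning with $s_t$ replaced by $\partial_ts_t$ gives $\sup_{\abs{t}\le T}\norm{\partial_t\Psi(\cdot,t)}_{H^2(\Lambda)}<\infty$.

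Next I would upgrade these pointwise-in-$t$ facts to statements about weak derivatives on $\Lambda\times(-T,T)$. The curves $t\mapsto\Psi(\cdot,t)$ and $t\mapsto\partial_t\Psi(\cdot,t)$ are continuous with values in $\cD(H)$ equipped with the graph norm --- since $s_t(\lambda),\lambda s_t(\lambda),\partial_ts_t(\lambda),\lambda\partial_ts_t(\lambda)$ all depend continuously on $t$, uniformly for $\lambda\in[\kappa,E]$ --- hence continuous, and in particular strongly measurable, into $H^2(\Lambda)$; being uniformly bounded on $(-T,T)$ they lie in $L^2((-T,T);H^2(\Lambda))$, with the weak $t$-derivative of $\Psi$ equal to $\partial_t\Psi$ by the preceding lemma. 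By the standard identification of such Bochner--Sobolev functions with functions of $(x,t)$ --- checked directly by testing against $\varphi\in C_c^\infty(\Lambda\times(-T,T))$ and applying Fubini's theorem, exactly as in the proof of the preceding lemma --- the weak derivatives $\partial_x^\alpha\Psi$ with $\abs{\alpha}\le2$ and $\partial_{x_i}\partial_t\Psi$ exist on $\Lambda\times(-T,T)$, coincide with the corresponding pointwise objects, and lie in $L^2(\Lambda\times(-T,T))$. Together with $\partial_t\Psi,\partial_t^2\Psi\in L^2(\Lambda\times(-T,T))$ (from \eqref{eq:L2Diff} and the preceding lemma) this exhausts all partial derivatives up to order two, so $\Psi\in H^2(\Lambda\times(-T,T))$.

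The main obstacle is the bookkeeping in the last step: identifying the $(d+1)$-dimensional weak derivatives of $\Psi$, in particular the mixed ones $\partial_{x_i}\partial_t\Psi$, with the spatial derivatives of the $\cD(H)$-valued curves $t\mapsto\Psi(\cdot,t)$ and $t\mapsto\partial_t\Psi(\cdot,t)$. This is routine --- mollify in $t$, use Fubini's theorem, and pass to the limit as in the previous lemma --- but needs to be carried out with some care; everything else reduces to the spectral calculus, Eq.~\eqref{eq:extendedOperator}, and the continuous embedding $\cD(\Delta^\bullet_\L)\hookrightarrow H^2(\Lambda)$.
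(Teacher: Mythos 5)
Your argument is correct and follows essentially the same route as the paper's proof: both reduce the claim to uniform (or integrable) in $t$ bounds on $\lVert \Psi(\cdot,t)\rVert_{H^2(\Lambda)}$ and $\lVert \nabla_x\partial_t\Psi(\cdot,t)\rVert_{L^2(\Lambda)}$ using the spectral calculus on $\Ran P_H(E)$, the relative boundedness of $V$ with respect to $\Delta_\Lambda^\bullet$, and the $H^2$-regularity of $\cD(\Delta_\Lambda^\bullet)$ from \cite[Example~4.2]{Seelmann-20}. The only (harmless) variations are that you bound $\Delta\Psi(\cdot,t)=\partial_t^2\Psi(\cdot,t)-V\Psi(\cdot,t)$ via the smoothing estimate~\eqref{eq:smoothing} where the paper invokes the operator and form bounds for $V$ directly together with the spectral calculus, and that you make the identification of slice-wise derivatives with weak derivatives on $\Lambda\times(-T,T)$ more explicit than the paper does.
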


\begin{proof}
	It follows from Lemma~\ref{lem:norm-compare} that $\Psi \in H^1(\Lambda \times (-T,T))$. Thus, it remains to show that
	\[
		\sum_{\substack{\abs{\mu}=2\\ \mu \in \NN_0^{d+1}}} \norm{\partial^\mu \Psi}_{L^2(\Lambda\times(-T,T))}^2<\infty.
	\]
	To this end, we write
	\be\label{eq:sndDer}
		\begin{aligned}
		\sum_{\substack{\abs{\mu}=2\\ \mu \in \NN_0^{d+1}}} \norm{\partial^\mu \Psi}&_{L^2(\Lambda\times(-T,T))}^2
		=\norm{ \partial_t^2 \Psi }_{L^2(\Lambda\times(-T,T))}^2\\
		&+\norm{ \nabla_x \partial_t \Psi }_{L^2(\Lambda\times(-T,T))}^2
		+\sum_{\substack{\abs{\mu}=2\\ \mu \in \NN_0^d}} \norm{\partial_x^\mu \Psi}_{L^2(\Lambda\times(-T,T))}^2.
		\end{aligned}
	\ee
	We already know that the first summand on the right-hand side is finite. In order to treat the second summand, we choose $c \geq 0$ with
	\[
		\langle (-\Delta_\Lambda^\bullet)f , f \rangle_{L^2(\Lambda)} \leq 2 \langle Hf , f \rangle_{L^2(\Lambda)} + c \norm{f}_{L^2(\Lambda)}^2
	\]
	for all $f \in \cD(\Delta_\Lambda^\bullet) = \cD(H)$, which is possible since $V$ is infinitesimally operator and thus also
	infinitesimally form bounded with respect to $-\Delta_\Lambda^\bullet$. Applying this for $f = (\partial_t \Psi)(\cdot,t)$ yields
	\begin{align*}
		\norm{ \nabla_x (\partial_t \Psi)(\cdot,t) }_{L^2(\Lambda)}^2
		&=\langle (-\Delta_\Lambda^\bullet) (\partial_t\Psi)(\cdot,t) , (\partial_t\Psi)(\cdot,t) \rangle_{L^2(\Lambda)}\\
		&\leq 2\langle H(\partial_t\Psi)(\cdot,t) , (\partial_t\Psi)(\cdot,t) \rangle_{L^2(\Lambda)} + c \norm{ (\partial_t\Psi)(\cdot,t) }_{L^2(\Lambda)}^2\\
		&=\int_{[\kappa,E]} (2\lambda + c) (\partial_t s_t(\lambda))^2 \Diff{\langle P_H(\lambda)\psi , \psi \rangle_{L^2(\Lambda)}}\\
		&\leq (2E+c)\norm{(\partial_t \Psi)(\cdot,t)}_{L^2(\Lambda)}^2.
	\end{align*}
	Integrating the latter over $t \in (-T,T)$ shows that also the second summand
	$\norm{ \nabla_x \partial_t \Psi }_{L^2(\Lambda\times(-T,T))}$ on the right-hand side of~\eqref{eq:sndDer} is finite.

	Let us now turn to the third summand. Since $V$ is infinitesimally operator bounded with respect to $\Delta_\Lambda^\bullet$, we may choose $\tilde{c} \geq 0$ with
	\[
		\norm{ \Delta_\Lambda^\bullet f }_{L^2(\Lambda)}^2
		\leq 4\norm{ Hf }_{L^2(\Lambda)}^2 + \tilde{c} \norm{ f }_{L^2(\Lambda)}^2
	\]
	for all $f \in \cD(\Delta_\Lambda^\bullet) = \cD(H)$. Using~\cite[Example~4.2]{Seelmann-21}, we now obtain for $f = \Psi(\cdot,t)$ that
	\begin{align*}
		\sum_{\substack{\abs{\mu}=2\\ \mu \in \NN_0^d}} \norm{\partial_x^\mu \Psi(\cdot,t)}_{L^2(\Lambda)}^2
		&\leq 2\sum_{\substack{\abs{\mu}=2\\ \mu \in \NN_0^d}} \frac{1}{\mu!}\norm{\partial_x^\mu \Psi(\cdot,t)}_{L^2(\Lambda)}^2\\
		&=2\norm{ \Delta_\Lambda^\bullet \Psi(\cdot,t) }_{L^2(\Lambda)}^2\\
		&\leq 8\norm{ H \Psi(\cdot,t) }_{L^2(\Lambda)}^2 + 2\tilde{c}\norm{ \Psi(\cdot,t) }_{L^2(\Lambda)}^2\\
		&=\int_{[\kappa,E]} (8\lambda^2 + 2\tilde{c}) s_t(\lambda)^2 \Diff{\langle P_H(\lambda)\psi , \psi \rangle_{L^2(\Lambda)}}\\
		&\leq(8\max\{ \abs{\kappa},\abs{E} \}^2 + 2\tilde{c}) \norm{\Psi(\cdot,t)}_{L^2(\Lambda)}^2.
	\end{align*}
	Integrating the latter over $t \in (-T,T)$ finally shows that also the third summand on the right-hand side
	of~\eqref{eq:sndDer} is finite. This completes the proof.
\end{proof}

\section*{Acknowledgments}

The authors are grateful to Ivan Veseli\'c for the suggestion to pursue this research direction and stimulating discussions during the completion of this work.

\newcommand{\etalchar}[1]{$^{#1}$}

\end{document}